\journal{arXiv}
\DeclareMathOperator{\cl}{cl}
\begin{document}

\begin{frontmatter}

%% Title, authors and addresses

%% use the tnoteref command within \title for footnotes;
%% use the tnotetext command for theassociated footnote;
%% use the fnref command within \author or \affiliation for footnotes;
%% use the fntext command for theassociated footnote;
%% use the corref command within \author for corresponding author footnotes;
%% use the cortext command for theassociated footnote;
%% use the ead command for the email address,
%% and the form \ead[url] for the home page:
%% \title{Title\tnoteref{label1}}
%% \tnotetext[label1]{}
%% \author{Name\corref{cor1}\fnref{label2}}
%% \ead{email address}
%% \ead[url]{home page}
%% \fntext[label2]{}
%% \cortext[cor1]{}
%% \affiliation{organization={},
%%             addressline={},
%%             city={},
%%             postcode={},
%%             state={},
%%             country={}}
%% \fntext[label3]{}

\title{Uniformly Star Superparacompact Subsets and Spaces} %% Article title

%% use optional labels to link authors explicitly to addresses:
%% \author[label1,label2]{}
%% \affiliation[label1]{organization={},
%%             addressline={},
%%             city={},
%%             postcode={},
%%             state={},
%%             country={}}
%%
%% \affiliation[label2]{organization={},
%%             addressline={},
%%             city={},
%%             postcode={},
%%             state={},
%%             country={}}

\author{Argha Ghosh} %% Author name

%% Author affiliation
\affiliation{organization={Department of Mathematics, Manipal Institute of Technology, Manipal Academy of Higher Education},%Department and Organization
            addressline={Manipal}, 
            %city={},
            postcode={576104}, 
            state={Karnataka},
            country={India}}

%% Abstract
\begin{abstract}
%% Text of abstract
Uniformly star superparacompactness, which is a topological property between compactness and completeness, can be characterized using finite-component covers and a measure of strong local compactness \cite{Adhikary2024, Das2021}. Using these finite-component covers and the associated functional, we introduce and investigate a variational notion of uniformly star superparacompact subsets in metric spaces in the spirit of studies on uniformly paracompact subset and UC-subset. We show that the collection of all such subsets forms a bornology with a closed base, which is contained in the bornology of uniformly paracompact subsets. Conditions under which these two bornologies coincide are specified. Furthermore, we provide several new characterizations of uniformly star superparacompact metric spaces—also known as cofinally Bourbaki-quasi complete spaces in terms of some geometric functionals. As a consequence, we establish new relationships among metric spaces that lie between compactness and completeness.
\end{abstract}

%%Graphical abstract

%\begin{graphicalabstract}
%\includegraphics{grabs}
%\end{graphicalabstract}

%%Research highlights

%\begin{highlights}
%\item Research highlight 1
%\item Research highlight 2
%\end{highlights}

%% Keywords
\begin{keyword} Uniformly star superparacompactness, uniformly paracompactness, geometric functionals, confinally Bourbaki-Completeness, Boubaki quasi-completeness, cofinally Bourbaki quasi-completeness 
%% keywords here, in the form: keyword \sep keyword

%% PACS codes here, in the form: \PACS code \sep code

MSC 2020: 54D20\sep 54E35 \sep 54E45\sep 54E50
%% or \MSC[2008] code \sep code (2000 is the default)

\end{keyword}

\end{frontmatter}

%% Add \usepackage{lineno} before \begin{document} and uncomment 
%% following line to enable line numbers
%% \linenumbers

%% main text
%%

%% Use \section commands to start a section
\section{Introduction}
This study is focused on the two topics—uniformly star superparacompact subsets of metric spaces and uniformly star superparacompact metric spaces. Uniformly star superparacompact metric spaces, which is also known as cofinally Bourbaki quasi-complete (cBq-complete), was introduced and studied recently in both uniform and metric settings in \cite{Adhikary2024, Das2021}. They characterize confinally Boubaki quasi-complete metric spaces in terms of two functionals $f_c$ and $f_p$, uniformly  strong local compactness in metric setting and in terms of finite-component open covers in uniform settings. 
Uniformly star superparacompactness like uniform paracompactness (also know as cofinally completeness in metric setting) is also a topological property between compactness and completeness, and over the past two decades, completeness-like properties that lie in between compactness and completeness have been the subject of significant research by numerous authors, including G. Beer, M.I. Garrido, A.S. Meroño, S. Kundu, M. Aggarwal, L. Gupta and N. Adhikary, among others (see \cite{Aggarwal2017, Beer2008, BeerBook, Das2024, Garrido2014, Garrido2016, Hohti1981} and the references therein). 

The following hierarchy of implications among various completeness-like properties in metric spaces is now well established:

\begin{align*}
\text{compact} \Rightarrow \text{cBq-complete} \Rightarrow \text{cofinally Bourbaki-complete} \\
\Rightarrow \text{cofinally complete} \Rightarrow \text{complete},
\end{align*}
\begin{align*}
\text{compact} \Rightarrow \text{cBq-complete} \Rightarrow \text{Bourbaki quasi-complete} \\
\Rightarrow \text{Bourbaki-complete} \Rightarrow \text{complete}.
\end{align*}

Now we draw attention to some notable resemblances between cofinally Bourbaki quasi-complete metric spaces, cofinally complete metric spaces and UC-spaces.

The space \(\langle X, d\rangle\) is a cofinally Boubaki quasi-complete if and only if every sequence \(\langle x_n \rangle\) in \(X\) satisfying \(\lim_{n\to\infty} f_c(x_n) = 0\) clusters, where the functional $f_c : X \to [0, \infty)$ is defined by
\[
f_c(x) = 
\begin{cases}
\sup \left\{ \varepsilon > 0 : S^\infty_d(x, \varepsilon) \text{ is compact} \right\}, & \exists \varepsilon > 0 \ni S^\infty_d(x, \varepsilon) \text{ is compact}, \\
0, & \text{otherwise}.
\end{cases}
\]
where $S^\infty_d(x, \varepsilon)$ denotes $\varepsilon$-chainable component of $x$ in $X$ (see \cite{Das2024}).

Similarly,\(\langle X, d\rangle\)is cofinally complete if and only if every sequence \(\langle x_n \rangle\) satisfying \(\lim_{n \to \infty} \nu(x_n) = 0\) clusters, where
\[
\nu(x) = 
\begin{cases}
\sup \{\varepsilon > 0 : \mathrm{cl}(S_d(x,\varepsilon)) \text{ is compact} \}, & \text{if } x \text{ has a compact neighborhood}, \\
0, & \text{otherwise}.
\end{cases}
\]
where $S_d(x, \varepsilon)$ denotes $\varepsilon$-neighborhood $x$ in $X$ (see \cite{Beer2008}).

Also, the metric space \(\langle X, d\rangle\) is a UC-space if and only if every sequence $\langle x_n\rangle$ in $X$ satisfying
\[
\lim_{n \to \infty} I(x_n) = 0
\]
clusters, where the isolation function $I : X \to [0, \infty)$ is defined by
\[
I(x) = d\left(x, X \setminus \{x\}\right),
\]
which measures the isolation of $x$ in the space (see \cite{Atsuji1958, BeerBook, Kundu2006}).

A metric space \(\langle X, d\rangle\) is cofinally Boubaki quasi-complete if and only if:
\begin{enumerate}
  \item its set of points with no compact chainable component, denoted \(\operatorname{nslc}(X) := \{x \in X : f_c(x) = 0\}\)=$\operatorname{Ker}f_c$, is compact, and
  \item for all \(\delta > 0\), the set \(\{x \in X : d(x, \operatorname{nslc}(X)) > \delta\}\) is strongly uniformly locally compact (see \cite{Das2024}).
\end{enumerate}

Similarly, \(\langle X, d\rangle\) is cofinally complete if and only if:
\begin{enumerate}
  \item its set of points with no compact neighborhood, denoted \(\operatorname{nlc}(X) := \{x \in X : \nu(x) = 0\}\)=$\operatorname{Ker}\nu$, is compact, and
  \item for all \(\delta > 0\), the set \(\{x \in X : d(x, \operatorname{nlc}(X)) > \delta\}\) is uniformly locally compact (see \cite{Beer2008, Hohti1981}).
\end{enumerate}

Similarly, \(\langle X, d\rangle\) is a UC-space if and only if:
\begin{enumerate}
    \item The set of limit points $X'=\operatorname{Ker} I$ is compact, and
    \item For all $\delta > 0$, the set $\{x \in X : d(x, X') > \delta\}$ is uniformly isolated (see \cite{Atsuji1958, BeerBook, Kundu2006}).
\end{enumerate}

 Both the classes are characterized by Cantor-type theorems: \(\langle X, d\rangle\) is a UC-space (respectively, cofinally complete) if and only if every decreasing sequence \(\langle A_n \rangle\) of nonempty closed subsets of \(X\) along which the set-functional \(\sup \{I(a) : a \in A_n\}\) (respectively, \(\sup \{\nu(a) : a \in A_n\}\)) tends to zero has nonempty intersection \cite{Beer1986}. It is immediate consequence of \cite[Theorem 28.20]{BeerBook} that \(\langle X, d\rangle\) is a cofinally Bourbaki quasi-complete if and only if every decreasing sequence \(\langle A_n \rangle\) of nonempty closed subsets of \(X\) along which the set-functional \(\sup \{f_c(a) : a \in A_n\}\) tends to zero has nonempty intersection.

 In the literature there are several other characterization of UC-spaces and cofinally complete metric spaces exist, for instance see \cite{Aggarwal2016, Aggarwal2017, BeerBook, Beer2010, Garrido2016, Gupta2021, Kundu2006} and references therein. In view of the known parallels between UC spaces, cofinally complete spaces and cofinally Boubaki quasi-complete spaces, it is natural to expect that the class of cofinally Boubaki quasi-complete spaces also has interesting characteristic properties. One of our goals is to further investigate in the class of cofinally Bourkai quasi-complete metric spaces. We note that cofinally Bourbaki quasi-complete spaces lie between UC-spaces and cofinally complete metric spaces. In Section 4, we give several new characterizations of cofinally Bourbaki quasi-complete spaces and we introduce two new geometric functionals to give Kuratowski-type theorems for cofinally Boubaki quasi-complete spaces (for instance, see Theorem \ref{BC-complete_to_USS} and Theorem \ref{BCQ-complete_to_USS} ).

Strongly uniform continuity is a variational alternative to uniform continuity was studied for functions $f : \langle X, d\rangle \to \langle Y, \rho \rangle $ restricted to nonempty subsets of $X$ in \cite{Beer2009}. It is well-know that strong uniform continuity of continuous functions may occur on a bornology that strictly larger than the family $\mathcal{K}(X)$ of compact subsets \cite{Beer2009, Beer2010}. For instance, let $X = (-\infty, 0] \cup \mathbb{N}$ with the usual metric. Then every continuous function $f : X \to \mathbb{R}$ is strongly uniformly continuous on any subset $E = A \cup B$ where $\cl {A}$ is compact and $B \subseteq \mathbb{N}$. In fact that the largest bornology on which each continuous function is strongly uniformly continuous is the bornology of UC-subsets. 

\begin{definition}\cite{Beer2009}
A nonempty subset $A$ of a metric space \(\langle X, d\rangle\) is called a \emph{UC-subset} if whenever $\langle a_n \rangle$ is a sequence in $A$ with $\lim_{n \to \infty} I(a_n) = 0$, then $\langle a_n \rangle$ has a cluster point in $X$.
\end{definition}

Considering the well-known connections between UC-spaces and cofinally complete spaces, Beer et al. in \cite{Beer2012}, introduced the following class of cofinally complete subsets: 

\begin{definition}\cite{Beer2012}
A nonempty subset $A$ of a metric space \(\langle X, d\rangle\) is called a \emph{cofinally complete subset} or \emph{uniformly paracompact subset}or a \emph{CC-subset} if whenever $\langle a_n \rangle$ is a sequence in $A$ with $\lim_{n \to \infty} \nu(a_n) = 0$, then $\langle a_n \rangle$ has a cluster point in $X$.
\end{definition}

In light of the established analogies among UC-spaces, cofinally complete spaces, and cofinally Bourbaki quasi-complete metric spaces, it is natural to anticipate that the class of uniformly star superparacompact subsets—introduced from a different perspective and formally motivated in Section 3—also exhibits significant and distinctive structural properties.

\begin{definition}
A nonempty subset $A$ of a metric space \(\langle X, d\rangle\) is called a uniformly star superparacompact subset if whenever $\langle a_n \rangle$ is a sequence in $A$ with $\lim_{n \to \infty} f_c(a_n) = 0$, then $\langle a_n \rangle$ has a cluster point in $X$.
\end{definition}

This, too, constitutes a variational notion-a subset $A$ does not become a uniformly star superparacompact subset simply by being
a uniformly star superparacompact space in its own right. One must consider the boundary points of the set that lie outside the set itself, inside the metric space in which the set is embedded. For example, the positive integers $\mathbb{N}$, consisting of points that are uniformly isolated, is evidently a uniformly star superparacompact when equipped with the usual metric since $\operatorname{Ker}f_c=\emptyset$. But $\mathbb{N}$ is not a uniformly star superparacompact subset of $\mathbb{R}$, because relative to $\mathbb{R}$, the sequence $1, 2, 3, \ldots$ has no cluster point, while for each $n \in \mathbb{N}$, $f_c(n) = 0$.

 One of the aims of this paper is to explore this newly introduced class of subsets. Our study uncovers not only the anticipated parallels between UC-subsets, CC-subsets and uniformly star superparacompact subsets but also yields novel characterizations of UC-subset, CC-subsets and cofinally Bourbaki quasi-complete metric spaces. 

 \section{Preliminaries}
This section provides the essential definitions required for the subsequent sections. Any additional notions not covered here may be found in \cite{Adhikary2024, BeerBook, Das2021, Das2024}.
All metric spaces will be assumed to contain at least two points. If \( A \) is a subset of \(\langle X, d\rangle\), we denote its closure and set of limit points by \( \cl(A) \) and \( A' \), respectively. For a metric space \(\langle X, d\rangle\), denote by \( S_d(x,\varepsilon) \)), the open ball with center \( x \in X \) and radius \( \varepsilon > 0 \), and for any subset \( A \) of \( X \) and \( \varepsilon > 0 \), we will denote the \( \varepsilon \)-enlargement of \( A \) by
\[
A^\varepsilon = \bigcup \{ S_d(a,\varepsilon): a \in A \} = \{ y : d(y, A) < \varepsilon \}.
\]
Furthermore, the \( \varepsilon \)-chainable component of \( x \in X \) is defined by
\[
S^{\infty}_d(x,\varepsilon) = \bigcup_{n \in \mathbb{N}} S^n_d(x,\varepsilon),
\]
where \( S^1_d(x,\varepsilon) = S_d(x,\varepsilon) \) and for every \( n \geq 2 \),
\[
S^n_d(x,\varepsilon) = (S^{n-1}_d(a,\varepsilon))^\varepsilon.
\]

Let \(\langle X, d\rangle\) be a metric space and \( \varepsilon > 0 \) be given. Then an ordered set of points \( \{x_0, x_1, \dots, x_n\} \) in \( X \) satisfying \( d(x_{i-1}, x_i) < \varepsilon \), where \( i = 1, 2, \dots, n \), is said to be an \( \varepsilon \)-chain of length \( n \) from \( x_0 \) to \( x_n \). 
Note that \( y \in S^n_d(x,\varepsilon) \) if and only if \( x \) and \( y \) can be joined by an \( \varepsilon \)-chain of length \( n \).

\begin{definition}\cite{Atsuji1958, BeerBook}
\begin{enumerate}
    \item A metric space \(\langle X, d\rangle\) is called \( \varepsilon \)-chainable if any two points of \( X \) can be joined by an \( \varepsilon \)-chain, whereas \( X \) is called chainable if \( X \) is \( \varepsilon \)-chainable for every \( \varepsilon > 0 \).
    \item A subset \( A \) of a metric space \(\langle X, d\rangle\) is said to be Bourbaki bounded (also known as finitely chainable) if for every \( \varepsilon > 0 \), there exist \( m \in \mathbb{N} \) and a finite collection of points \( p_1, p_2, \dots, p_k \in X \) such that 
    \[
    A \subseteq \bigcup_{i=1}^{k} S^n_d(p_i,\varepsilon).
    \]
\end{enumerate}
\end{definition}

\begin{definition}\cite{Adhikary2024, Das2021}
Let \(\langle X, d\rangle\) be a metric space. A subset \( A \subseteq X \) is said to be a qC-precompact subset of \( X \) (or sometimes simply called qC-precompact in \( X \)) if for every \( \varepsilon > 0 \), there exists a finite collection of points \( p_1, p_2, \dots, p_k \in X \) such that
\[
A \subseteq \bigcup_{i=1}^{k} S^{\infty}_d(p_i,\varepsilon).
\]
\end{definition}

\begin{definition}\cite{Adhikary2024, Das2021, Das2024, Garrido2014} Let \(\langle X, d\rangle\) be a metric space. 
\begin{enumerate}
    \item A sequence \( \langle x_n \rangle \) is said to be pseudo Bourbaki-Cauchy in \( X \) if for every \( \varepsilon > 0 \) and $n\in\mathbb N$, there exists\( n_0 \in \mathbb{N} \) such that $x_j$ and $x_k$ can be joined by an $\varepsilon$-chain for some $j>k\geq n_0$.
    \item A sequence \( \langle x_n \rangle \) is said to be Bourbaki-Cauchy in \( X \) if for every \( \varepsilon > 0 \), there exist \( m \in \mathbb{N} \) and \( n_0 \in \mathbb{N} \) such that for some \( p \in X \), we have \( x_n \in S^{m}_d(p,\varepsilon) \) for every \( n \geq n_0 \).
    \item A sequence \( \langle x_n \rangle \) is said to be Bourbaki quasi-Cauchy in \( X \) if for every \( \varepsilon > 0 \), there exists \( n_0 \in \mathbb{N} \) such that for some \( p \in X \), we have \( x_n \in S^{\infty}_d(p,\varepsilon) \) for every \( n \geq n_0 \).
    \item  A sequence \( \langle x_n \rangle \) is said to be cofinally Bourbaki quasi-Cauchy in \( X \) if for every \( \varepsilon > 0 \), there exists there exists an infinite subset \( N_\varepsilon \subseteq \mathbb{N} \) such that for some \( p \in X \), we have \( x_n \in S^{\infty}_d(p,\varepsilon) \) for every \( n \in N_\varepsilon \). 
\end{enumerate}

\end{definition}

\begin{definition}\cite{Adhikary2024, Das2021, Das2024, Garrido2014} Let \(\langle X, d\rangle\)be a metric space. 
\begin{enumerate}
    \item $X$ is said to be Bourbaki-complete if every Bourbaki-Cauchy sequence in \( X \) has a cluster point.
    \item $X$ is said to be Bourbaki quasi-complete if every Bourbaki quasi-Cauchy sequence in \( X \) has a cluster point.
    \item $X$ is said to be cofinally Bourbaki quasi-complete (cBq-complete) if every cofinally Bourbaki quasi-Cauchy sequence in \( X \) has a cluster point.
\end{enumerate}

\end{definition}

\begin{definition}\cite{Das2024}
A metric space \(\langle X, d\rangle\)is said to be strongly locally compact if \( f_c(x) > 0 \) for all \( x \in X \). It is said to be strongly uniformly locally compact if \(\inf \{ f_c(x) : x \in X \} > 0.\)
\end{definition}

Let \( V \) be a family of subsets of a set \( X \). We say that \( V \) is a cover of \( A \subseteq X \) provided that \( A \subseteq \bigcup V \). A second family of subsets \( \mathcal{U} \) is said to refine \( V \) if for every \( U \in \mathcal{U} \), there exists \( V \in V \) such that \( U \subseteq V \). Each metric space is, of course, paracompact: every open cover has a locally finite open refinement. A metric space is called uniformly paracompact \cite{Rice1977} if for each open cover \( V \), there exist an open refinement \( \mathcal{U} \) and \( \delta > 0 \) such that for each \( x \in X \), the open ball \( S_d(x,\delta) \) intersects only finitely many members of \( \mathcal{U} \).

\section{Uniformly Star Pracompact Subsets}
\label{sec1}
%% Labels are used to cross-reference an item using \ref command.
In this section we will study the family of those subsets \( A \) of a metric space \( \langle X, d \rangle \) that are \emph{uniformly star superparacompact}: for each open cover \( \mathcal{V} \) of \( X \), there exists \( \mu > 0 \) and an open cover \( \mathcal{U} \) refining \( \mathcal{V} \) such that for each \( a \in A \), \( S_d^{\infty}(a, \mu) \) intersects at most finitely many members of \( \mathcal{U} \).

\begin{theorem}\label{coverconditions}
Let \( A \) be a nonempty subset of a metric space \( \langle X, d \rangle \). Then the following conditions are equivalent:
\begin{enumerate}
    \item \( A \) is uniformly star superparacompact;
    \item for each open cover \( \mathcal{V} \) of \( X \), there exists \( \mu > 0 \) such that whenever \( E \subset S^{\infty}_d(a, \mu)  \) for some $a\in A$, there exist \( \{V_1, V_2, \ldots, V_n\} \subseteq \mathcal{V} \) with \( E \subseteq \bigcup_{j=1}^{n} V_j \);
    \item whenever \( \{V_j : j \in \Lambda\} \) is an open cover of \( X \) directed by inclusion, there exists \( \mu > 0 \) such that for all \( a \in A \), there exists \( j \in \Lambda \) with \( S^{\infty}_d(a, \mu) \subseteq V_j \).
    \item whenever \( \mathcal{U} \) is a locally finite open cover of \( X \), there exists \( \mu > 0 \) such that for each \( a \in A \), \( S^{\infty}_d(a, \mu) \) intersects at most finitely many members of \( \mathcal{U} \);
\end{enumerate} 
\end{theorem}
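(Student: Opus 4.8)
The plan is to establish the equivalence of the four conditions by proving a cycle of implications, namely $(1)\Rightarrow(2)\Rightarrow(3)\Rightarrow(4)\Rightarrow(1)$, exploiting the special geometry of the chainable components $S^\infty_d(a,\mu)$. The crucial structural fact I would isolate first is that the chainable components behave well under enlargement: for a fixed scale $\mu$, the sets $\{S^\infty_d(x,\mu):x\in X\}$ partition $X$ into $\mu$-chainable equivalence classes, so that $S^\infty_d(a,\mu)$ is saturated and any point within distance $\mu$ of it lies in the same component. This ``all-or-nothing'' property is what replaces the mere ball estimates used in the uniform-paracompactness analogue and is the engine behind each implication.

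For $(1)\Rightarrow(2)$, I would start from the refinement $\mathcal{U}$ and scale $\mu$ guaranteed by uniform star superparacompactness. Given $E\subseteq S^\infty_d(a,\mu)$, the component $S^\infty_d(a,\mu)$ meets only finitely many members $U_1,\dots,U_n$ of $\mathcal{U}$, hence $E\subseteq\bigcup_{i=1}^n U_i$; choosing for each $U_i$ a member $V_i\in\mathcal{V}$ with $U_i\subseteq V_i$ (possible since $\mathcal{U}$ refines $\mathcal{V}$) yields the desired finite subfamily of $\mathcal{V}$ covering $E$. For $(2)\Rightarrow(3)$, given a cover $\{V_j:j\in\Lambda\}$ directed by inclusion, I would apply $(2)$ to obtain $\mu>0$; for each $a\in A$ taking $E=S^\infty_d(a,\mu)$ produces finitely many $V_{j_1},\dots,V_{j_n}$ whose union contains $S^\infty_d(a,\mu)$, and directedness lets me absorb these into a single $V_j$, giving $S^\infty_d(a,\mu)\subseteq V_j$ as required.

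For $(3)\Rightarrow(4)$, let $\mathcal{U}=\{U_i:i\in I\}$ be a locally finite open cover. The standard device is to index the finite subsets of $I$ and set, for each finite $F\subseteq I$, $V_F=\bigcup_{i\in F}U_i$; the family $\{V_F:F\subseteq I \text{ finite}\}$ is then an open cover directed by inclusion. Applying $(3)$ furnishes a $\mu>0$ with $S^\infty_d(a,\mu)\subseteq V_F$ for some finite $F$ depending on $a$; since $V_F$ is contained in the union of finitely many members of $\mathcal{U}$, the component $S^\infty_d(a,\mu)$ can meet only those finitely many $U_i$ together with any others forced by the structure, and local finiteness bounds the total, so $S^\infty_d(a,\mu)$ intersects only finitely many members of $\mathcal{U}$. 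Finally, $(4)\Rightarrow(1)$ is essentially a reformulation: given an arbitrary open cover $\mathcal{V}$, paracompactness of the metric space produces a locally finite open refinement $\mathcal{U}$, and the $\mu$ supplied by $(4)$ shows each $S^\infty_d(a,\mu)$ meets only finitely many members of $\mathcal{U}$, which is exactly the definition of uniform star superparacompactness witnessed by $\mathcal{U}$ and $\mu$.

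I expect the main obstacle to be the step $(3)\Rightarrow(4)$, specifically converting the ``contained in a single directed-cover member'' conclusion into the ``meets only finitely many'' conclusion for the original locally finite cover. The delicate point is that $V_F$ being a union of finitely many $U_i$ does not by itself prevent $S^\infty_d(a,\mu)$ from meeting infinitely many \emph{other} members of $\mathcal{U}$; I would resolve this by using local finiteness to pass from a cover of the component to a uniform bound, possibly by first shrinking to a slightly smaller scale and invoking the saturation property so that no point of a neighboring component leaks in. Verifying that the constant $\mu$ can be chosen uniformly over all $a\in A$, rather than pointwise, is the part that genuinely uses the hypothesis and will require the most care.
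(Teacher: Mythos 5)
Your cycle of implications matches the paper's, and your arguments for $(1)\Rightarrow(2)$, $(2)\Rightarrow(3)$, and $(4)\Rightarrow(1)$ are essentially identical to the paper's proof and are correct. The problem is $(3)\Rightarrow(4)$, where you yourself flag the difficulty but do not actually resolve it: your proposed directed cover $\{V_F=\bigcup_{i\in F}U_i : F\subseteq I \text{ finite}\}$, built from finite unions of members of $\mathcal{U}$ itself, genuinely does not work. Knowing $S^\infty_d(a,\mu)\subseteq V_F$ tells you nothing about how many members of $\mathcal{U}$ the component meets, because members of a locally finite cover may overlap heavily. Concretely, take $X=\mathbb{R}$ and $\mathcal{U}=\{\mathbb{R}\}\cup\{(n-1,n+1):n\in\mathbb{Z}\}$, which is locally finite; your directed cover contains $V_{\{\mathbb{R}\}}=\mathbb{R}$, so the containment conclusion of $(3)$ is satisfied trivially at every scale, yet the chainable component $S^\infty_d(a,\mu)=\mathbb{R}$ meets infinitely many members of $\mathcal{U}$. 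Your suggested repair (``shrinking to a slightly smaller scale and invoking the saturation property'') does not address this, since the failure has nothing to do with points leaking in from neighboring components --- it comes from choosing the wrong directed cover.

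The paper's fix is to build the directed cover from different sets: for each $x\in X$ use local finiteness to pick $\delta_x>0$ so that the ball $S_d(x,\delta_x)$ meets only finitely many members of $\mathcal{U}$, and let $\mathcal{W}$ consist of all \emph{finite unions of these balls}. This $\mathcal{W}$ is an open cover directed by inclusion, so $(3)$ gives $\mu>0$ with each $S^\infty_d(a,\mu)$ contained in a finite union of the balls $S_d(x,\delta_x)$; since each such ball meets only finitely many members of $\mathcal{U}$, so does $S^\infty_d(a,\mu)$. In other words, the directed cover must be assembled from sets that are individually known to meet only finitely many members of $\mathcal{U}$ --- containment in a finite union of such sets then transfers the finiteness, which is exactly the step your version cannot perform. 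With this substitution your proof becomes the paper's proof; everything else you wrote stands.
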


\begin{proof}
 $(1)\Rightarrow (2)$ By $(1)$, we can choose \( \mu > 0 \) and \( \mathcal{U} \) an open cover refining \( \mathcal{V} \) such that for each \( a \in A \), \( S^{\infty}_d(a, \mu) \) intersects at most finitely many elements of \( \mathcal{U} \). Let \( E \) satisfy $E\subseteq S^{\infty}_d(a_0, \mu)$ for some $a_0\in A$. Let \( U_1, U_2, \ldots, U_n \) be those members of the cover that \( S^{\infty}_d(a_0, \mu) \) hits. Choose for \( j = 1, 2, \ldots, n \) elements \( V_j \in \mathcal{V} \) with \( U_j \subseteq V_j \). As \( \mathcal{U} \) is a cover of \( X \),
\[
E \subseteq S^{\infty}_d(a_0, \mu) \subseteq \bigcup_{j=1}^{n} U_j \subseteq \bigcup_{j=1}^{n} V_j
\]
as required.  

$(2) \Rightarrow  (3)$
Let \( \{V_j : j \in \Lambda\} \) be an open cover of \( X \) directed by inclusion, i.e.,  whenever \(j_1, j_2\in \Lambda \), there exists \( j_3 \in \Lambda \) with \( V_{j_1} \cup V_{j_2} \subseteq V_{j_3} \). Choose by $(2)$ \( \mu > 0 \) such that if $E\subseteq S^{\infty}_d(a, \mu)$ for some $a\in A$, then \( E \) is contained in a finite union of members of the cover. Let $a\in A$. Then by the assumption, there exists \( \{j_1, j_2, j_3, \ldots, j_k\} \subseteq \Lambda \) with
\[
S^{\infty}_d(a, \mu) \subseteq \bigcup_{l=1}^{k} V_{j_l}.
\]
But there exists \( j_{n+1} \in \Lambda \) such that
\[
\bigcup_{l=1}^{n} V_{j_l} \subseteq V_{j_{n+1}}.
\]
Hence condition $(3)$ holds.

$(3)\Rightarrow  (4)$ Let \( \mathcal{U} \) be a locally finite open cover of \( X \). For each \( x \in X \), choose \( \delta_x > 0 \) such that \( S_d(x, \delta_x) \) intersects only finitely many members of \( \mathcal{U} \). Let \( \mathcal{W} \) be the cover of \( X \) consisting of all finite unions of members of \( \{ S_d(x, \delta_x) : x \in X \} \). Obviously, \( \mathcal{W} \) is an open cover of \(X\) directed by inclusion. Thus, by $(3)$ there exists \( \mu > 0 \) such that for all \( a \in A \), \( S^{\infty}_d(a, \mu) \) is contained in a finite union of members of \( \{ S_d(x, \delta_x) : x \in X \} \). Consequently, \( S^{\infty}_d(a, \mu) \) hits only finitely many members of \( \mathcal{U} \).

$(4)\Rightarrow  (1)$ It follows from the fact that every metric space is paracompact: every open cover of $X$ has a locally finite open refinement. 

\end{proof}

\begin{corollary}
Let \( \langle X, d \rangle \) be a metric space. The family of uniformly star superparacompact subsets of \( X \) forms a bornology with closed base.  
\end{corollary}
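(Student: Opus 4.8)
The plan is to verify the three bornology axioms and then produce a closed base by proving that the closure of a uniformly star superparacompact subset is again uniformly star superparacompact. Throughout I would work with the sequential description of these subsets through the functional $f_c$, since it makes the algebra of the family transparent. Covering of $X$ is immediate, because every singleton $\{x\}$ is uniformly star superparacompact: the only sequence in it is the constant sequence, which clusters at $x$. Heredity is equally direct, for if $B \subseteq A$ with $A$ uniformly star superparacompact, then any sequence in $B$ is a sequence in $A$, and if $f_c \to 0$ along it then it clusters in $X$. For stability under finite unions it suffices to treat $A_1 \cup A_2$: given $\langle x_n \rangle$ in the union with $\lim_n f_c(x_n)=0$, one of $A_1, A_2$ contains a subsequence, that subsequence still satisfies $f_c \to 0$, hence clusters by the hypothesis on that $A_i$, and its cluster point is a cluster point of $\langle x_n \rangle$; induction finishes the general finite union. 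These steps are routine.

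The substantive part is the closed base, and its engine is a perturbation estimate for $f_c$ that I expect to be the main obstacle and would isolate as a lemma: if $d(x,y) < f_c(x)$ then $f_c(y) \ge f_c(x)$. To prove it I would first record that each $\varepsilon$-chainable component $S^{\infty}_d(x,\varepsilon)$ is clopen (open as a union of balls, closed as the complement of the union of the remaining components) and that these components are increasing in $\varepsilon$; hence compactness of $S^{\infty}_d(x,\varepsilon)$ passes to all smaller radii, so $S^{\infty}_d(x,\varepsilon)$ is compact for every $\varepsilon < f_c(x)$. Next, if $d(x,y) < \varepsilon$ then $x$ and $y$ form an $\varepsilon$-chain of length one, so $S^{\infty}_d(x,\varepsilon) = S^{\infty}_d(y,\varepsilon)$; choosing $d(x,y) < \varepsilon < f_c(x)$ makes $S^{\infty}_d(y,\varepsilon)$ compact, and letting $\varepsilon \uparrow f_c(x)$ yields $f_c(y) \ge f_c(x)$. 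Reading this in the contrapositive gives the estimate I actually need, namely $f_c(y) \le \max\{f_c(x),\, d(x,y)\}$ for all $x,y \in X$.

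With the estimate in hand the closed base follows quickly. Let $\langle x_n \rangle$ be a sequence in $\cl(A)$ with $\lim_n f_c(x_n)=0$, and choose $a_n \in A$ with $d(x_n,a_n) < 1/n$. The estimate gives $f_c(a_n) \le \max\{f_c(x_n),\, 1/n\} \to 0$, so $\langle a_n \rangle$ is a sequence in $A$ along which $f_c \to 0$; since $A$ is uniformly star superparacompact it has a cluster point $p \in X$. Because $d(x_n,a_n) \to 0$, the point $p$ is also a cluster point of $\langle x_n \rangle$ (infinitely many $a_n$ enter any $S_d(p,\eta/2)$, and for large $n$ the corresponding $x_n$ lie in $S_d(p,\eta)$). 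Hence $\cl(A)$ is uniformly star superparacompact, and $\{\cl(A) : A \text{ uniformly star superparacompact}\}$ is a base of the bornology consisting of closed sets, which completes the proof.
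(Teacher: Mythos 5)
Your proof is correct, but it takes a genuinely different route from the paper's. The paper derives the corollary entirely from the covering characterization: the bornology axioms are read off from condition (2) of Theorem \ref{coverconditions}, and the closed-base step is a three-line argument --- given $a \in \cl(A)$ and an open cover, pick $b \in A$ with $d(a,b) < \mu$ and use the identity $S^{\infty}_d(a,\mu) = S^{\infty}_d(b,\mu)$ to transfer the finite-subcover property from points of $A$ to points of $\cl(A)$. You instead work throughout with the sequential description via $f_c$, and your engine is the perturbation estimate $f_c(y) \le \max\{f_c(x),\, d(x,y)\}$, which you prove from the same underlying geometric facts (chainable components are clopen and increasing in $\varepsilon$, compactness passes down to smaller radii, and points at distance less than $\varepsilon$ share their $\varepsilon$-chainable component). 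That lemma is correct, and it is actually sharper than the $1$-Lipschitz property of $f_c$ that the paper records only later; for instance it yields $f_c(x) = f_c(y)$ whenever $d(x,y) < \min\{f_c(x), f_c(y)\}$, so it is a reusable structural fact in its own right. One caveat on logical placement: the corollary in Section 3 is stated for the covering definition and is proved in the paper \emph{before} Theorem \ref{functional_char}, whereas your argument presupposes the equivalence of the covering definition with the sequential one (conditions (1) $\Leftrightarrow$ (4) of Theorem \ref{functional_char}), or else takes the introduction's sequential definition as primitive. This creates no circularity --- the proof of Theorem \ref{functional_char} nowhere uses the corollary --- but the dependence should be stated explicitly. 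In exchange for this dependence, your route makes the bornology axioms (singletons, heredity, finite unions) completely transparent and isolates a clean lemma about $f_c$; the paper's route is shorter and self-contained at the point where the statement appears.
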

\begin{proof}
 By condition $(2)$ of Theorem \ref{coverconditions} it is obvious that the family of uniformly star superparacompact subsets forms a bornology of $X$. Let \( A \) be a uniformly star superparacompact nonempty subset of $X$. Let \( \mathcal{V} \) be an open cover of \( X \). By $(2)$ of Theorem \ref{coverconditions}, choose \( \mu > 0 \) such that whenever \( E \subset S^{\infty}_d(a, \mu)  \) for some $a\in A$, there exist \( \{V_1, V_2, \ldots, V_n\} \subseteq \mathcal{V} \) with \( E \subseteq \bigcup_{j=1}^{n} V_j \). If \( E \subset S^{\infty}_d(a, \mu)  \) for some $a\in \cl (A)$. Then choose $b\in A$ such that $d(a, b)<\mu$. Then \(S^{\infty}_d(a, \mu)=S^{\infty}_d(b, \mu)\). Thus \(E\) has a finite subcover from \( \mathcal{V} \). 
 \end{proof}

\begin{definition}
 Let \( \langle X, d \rangle \) and \( \langle Y, \rho \rangle \) be metric spaces and let \( f \in Y^X \). We say \( f \) is \emph{uniformly locally component bounded} on \( A \subseteq X \) if there exists \( \delta > 0 \) such that for all \( a \in A \), \( f(S^{\infty}_d(a, \delta)) \) is a metrically bounded subset of \( Y \).   
\end{definition} 

\vspace{0.5em}

\begin{theorem}\label{functional_char}
Let \( A \) be a non-empty subset of a metric space \( \langle X, d \rangle \). Then the following conditions are equivalent:
\begin{enumerate}
     \item \( A \) is uniformly star superparacompact;
    
    \item If \( f : \langle X, d \rangle \to \langle Y, \rho \rangle \) is continuous, then \( f \) is uniformly locally component bounded on \( A \);
    
    \item If \( f \in C(X, \mathbb{R}) \), then \( f \) is uniformly locally component bounded on \( A \).
    \item Each sequence \( \langle a_n \rangle \) in \( A \) such that \( \lim_{n \to \infty} f_c(a_n) = 0 \) clusters;
    
    \item \( \cl{A} \cap \operatorname{nslc}(X) \) is compact, and for all \( \delta > 0 \), there exists \( \mu > 0 \) such that whenever \( a \in A \setminus S_d(\cl{A} \cap \operatorname{nslc}(X), \delta) \), we have \( f_c(a) > \mu \);

\end{enumerate}
 
\end{theorem}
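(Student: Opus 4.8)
The plan is to argue cyclically, $(1)\Rightarrow(2)\Rightarrow(3)\Rightarrow(4)\Rightarrow(5)\Rightarrow(1)$, so that the covering characterizations of Theorem~\ref{coverconditions} sit at both ends while the functional $f_c$ mediates the middle. Throughout write $K:=\cl{A}\cap\operatorname{nslc}(X)$. For $(1)\Rightarrow(2)$ I would fix a continuous $f:\langle X,d\rangle\to\langle Y,\rho\rangle$, choose a basepoint $y_0\in Y$, and apply the directed-cover form of uniform star superparacompactness, namely condition $(3)$ of Theorem~\ref{coverconditions}, to the open cover $\{f^{-1}(S_\rho(y_0,n)):n\in\mathbb N\}$, which is directed by inclusion. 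This produces a single $\mu>0$ with $S^{\infty}_d(a,\mu)\subseteq f^{-1}(S_\rho(y_0,n_a))$ for every $a\in A$, so each $f(S^{\infty}_d(a,\mu))$ is $\rho$-bounded; this is precisely uniform local component boundedness. The implication $(2)\Rightarrow(3)$ is the trivial specialization $Y=\mathbb R$.

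The structural equivalence $(4)\Leftrightarrow(5)$ I would treat as a unit, the only analytic input being two soft properties of $f_c$. First, if $d(a,x)<\varepsilon$ then $S^{\infty}_d(a,\varepsilon)=S^{\infty}_d(x,\varepsilon)$, so $f_c(x)=0$ forces $f_c(a)\le d(a,x)$; consequently $f_c$ vanishes at every cluster point of a sequence along which $f_c\to0$. Second, if $f_c(p)=c>0$ and $d(a,p)<c/2$ then $S^{\infty}_d(a,c/2)=S^{\infty}_d(p,c/2)$ is compact, whence $f_c(a)\ge c/2$ — a lower-semicontinuity-type bound preventing $f_c$ from collapsing near a point where it is positive. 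For $(4)\Rightarrow(5)$: compactness of $K$ follows by approximating a sequence in $K$ by a sequence in $A$ with $f_c\to0$ (first property), clustering it by $(4)$, and locating the cluster point back in $K$ (second property); the uniform lower bound off $K$ follows by the same mechanism applied to a hypothetical violating sequence. For $(5)\Rightarrow(4)$, condition $(5)$ forces any sequence in $A$ with $f_c\to0$ to lie eventually inside every $\delta$-neighbourhood of the compact set $K$, and compactness of $K$ then supplies the cluster point.

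For $(3)\Rightarrow(4)$ I would argue contrapositively: given $\langle a_n\rangle$ in $A$ with $f_c(a_n)\to0$ and no cluster point, pass to a subsequence of distinct points whose range is closed and discrete and with $f_c(a_n)<1/n$, so that each component $C_n:=S^{\infty}_d(a_n,1/n)$ is clopen and non-compact. The goal is a single $g\in C(X,\mathbb R)$ that is unbounded on every $C_n$; since the components increase with the radius, this already defeats uniform local component boundedness at every scale $\delta$. I would realize $g$ as a locally finite sum of scaled bump functions peaking along a closed discrete set $B$ chosen so that $B\cap C_n$ is infinite for each $n$, using that each $C_n$, being a non-compact metric space, carries a sequence with no cluster point and hence an infinite closed discrete subset. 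The delicate point, exactly as in the CC-subset theory of \cite{Beer2012}, is that the components $C_n$ at different scales overlap, so the choice of $B$ must be diagonalized to keep $B$ globally closed and discrete while still meeting each $C_n$ infinitely often.

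The remaining implication $(5)\Rightarrow(1)$ is the crux, and I would phrase it through the locally-finite-cover form, condition $(4)$ of Theorem~\ref{coverconditions}, splitting $A$ at the compact set $K$. For $a\in A$ with $d(a,K)>\delta$, condition $(5)$ gives $f_c(a)>\mu_1$, so $S^{\infty}_d(a,\mu_1)$ is a clopen subset of a compact chainable component and is therefore compact, meeting only finitely many members of a given locally finite cover. For $a$ within $\delta$ of $K$ one uses that the compact set $K$ meets only finitely many $\mu$-chainable components, so that for small $\mu$ the component $S^{\infty}_d(a,\mu)$ of a point near $K$ is one of finitely many fixed clopen sets. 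I expect the \emph{main obstacle} to be bounding, uniformly over the cover, how many members these fixed near-$K$ components can meet: unlike ordinary $\mu$-balls they need not shrink to $K$, so this is precisely where compactness of $K$ must be combined with the strong uniform local compactness that $(5)$ provides just off $K$, and it is the one step where the chainable-component setting genuinely departs from the classical CC-subset argument. Taking $\mu$ to be the minimum of the radii produced in the two regimes and invoking paracompactness to pass from a radius to the refinement then closes the cycle.
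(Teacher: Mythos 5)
Your handling of $(1)\Rightarrow(2)\Rightarrow(3)$ matches the paper's, and your two-sided treatment of $(4)\Leftrightarrow(5)$ via the Lipschitz-type properties of $f_c$ is essentially correct and more self-contained than the paper, which simply cites \cite[Theorem 28.10]{BeerBook} for $(4)\Rightarrow(5)$. The first real gap is in $(3)\Rightarrow(4)$: you assert that a closed discrete set $B$ meeting each non-compact component $C_n=S^{\infty}_d(a_n,1/n)$ in an infinite set can be ``diagonalized'' into existence, but you never produce it, and producing it is exactly the difficulty. The paper's missing idea is a dichotomy on $B_0=\{a_n:n\in\mathbb N\}$: if $B_0$ is qC-precompact, one never needs to enter the components at all --- a Bourbaki quasi-Cauchy subsequence exists, and the Tietze extension of $a_n\mapsto n$ is automatically unbounded on some $\delta$-chainable component for every $\delta>0$; if $B_0$ is not qC-precompact, one first extracts a subsequence whose $\delta$-components are pairwise disjoint for a single $\delta>0$, and only because of this disjointness is $\bigcup_n S^{\infty}_d(a_n,\delta/n)$ closed, so that unbounded continuous functions on the separate clopen pieces glue to a member of $C(X,\mathbb R)$. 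Your unified bump construction conflates these two regimes and is unjustified as stated.

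The second gap is fatal, and it sits exactly where you located it: the implication $(5)\Rightarrow(1)$ is not merely delicate, it is false, and hence so is the stated theorem. Take $X=\mathbb R$ with the usual metric and $A=\{0\}$. Since $\mathbb R$ is connected, $S^{\infty}_d(x,\varepsilon)=\mathbb R$ for every $x$ and every $\varepsilon>0$, so $f_c\equiv 0$ and $\operatorname{nslc}(X)=\mathbb R$. Then $(4)$ and $(5)$ hold trivially: $K=\cl(A)\cap\operatorname{nslc}(X)=\{0\}$ is compact and $A\setminus S_d(K,\delta)=\emptyset$ for every $\delta>0$. But $(1)$--$(3)$ all fail: for the cover $\{(-n,n):n\in\mathbb N\}$, any open refinement covering $\mathbb R$ has infinitely many nonempty members, each of which meets $S^{\infty}_d(0,\mu)=\mathbb R$, and the identity map is continuous yet unbounded on every chainable component of $0$. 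The reason is precisely your observation: chainable components of points near $K$, unlike balls, need not shrink into a neighbourhood of $K$, and condition $(5)$ bounds $f_c$ from below only at points of $A$, giving no control over the rest of such a component; this is why the CC-subset argument of \cite{Beer2012}, which works with genuine balls, does not transfer. You should also know that the paper's own proof of $(5)\Rightarrow(1)$ breaks at the same spot: after setting $\mu=\min\{\delta,\varepsilon/3\}$ it concludes ``$f_c(a)>\delta\geq\mu$'' for \emph{every} $a\in A$, silently applying the implication in $(5)$ without its hypothesis $d(a,K)>\varepsilon/3$; for $a\in A$ close to $K$ (e.g.\ $a=0$ above) this is simply false. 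The equivalence does hold when $A=X$ (conditions $(8)$ and $(9)$ of Theorem \ref{equivalentconditionsforspace}), because there the lower bound on $f_c$ applies to every point of a component, so a component either stays near $\operatorname{nslc}(X)$ or contains a point witnessing its compactness; for proper subsets that argument, and the theorem, collapse, so no completion of your proposal (or of the paper's proof) is possible without adding hypotheses.
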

\begin{proof}
$(1)\Rightarrow (2)$ Fix \( y_1 \in Y \). Then \( \{ f^{-1}(S_\rho(y_1, n)) : n \in \mathbb{N} \} \) is an open cover of \( X \). Thus, by assumption, there exists \( \mu > 0 \) such that for each \( a \in A \), \( S^{\infty}_d(a, \mu) \) is contained in a finite union of these preimage sets. Hence \( f(S^{\infty}_d(a, \mu)) \) is a metrically bounded subset of \( Y \).   

$(2)\Rightarrow  (3)$ This is obvious.

$(3)\Rightarrow  (4)$ Suppose to the contrary that there exists a sequence \( \langle a_n \rangle \) in \( A \) for which \( \lim_{n \to \infty} f_c(a_n) = 0 \) but it does not have a cluster point. Without loss of generality, we may assume that its terms are distinct. Let \( B = \{ a_n : n \in \mathbb{N} \} \). Then $B$ is closed and discrete.

Now suppose \( B \) is a qC-precompact set. Then by passing to a subsequence, we may assume that \( \langle a_n \rangle \) is a Bourbaki quasi-Cauchy sequence. Let \( f \in C(X, \mathbb{R}) \) map each \( a_n \) to \( n \). Then \( f \) fails to be uniformly locally component bounded on $B$, which is a contradiction.

Now suppose \( B \) is not a qC-precompact set. Then by Theorem 2.7 \cite{Adhikary2024, Das2021}, $B$ contains an infinite uniformly chain discrete subset. Therefore, by passing to a subsequence, we can find \( \delta > 0 \) such that \( \{ S^{\infty}_d(a_n, \delta) : n \in \mathbb{N} \} \) is a pairwise disjoint family of $\delta$-chainable components. Since \( \lim_{n \to \infty} f_c(a_n) = 0 \), by passing to a subsequence we can assume $f_c(a_n)<\frac{\delta}{n}$. Then for each \( n \in \mathbb{N} \), \(S^{\infty}_d(a_n, \frac{\delta}{n})\) is not compact. For each \( n \), let \( f_n : S^{\infty}_d(a_n, \frac{\delta}{n}) \to \mathbb{R} \) be continuous and unbounded. Now \( \bigcup_{n=1}^{\infty} S^{\infty}_d(a_n, \frac{\delta}{n}) \) is closed. Indeed, if $a$ is a limit point of the set and $S_d(a,\delta)$ intersects \(S^{\infty}_d(a_n, \frac{\delta}{n})\), then $S_d(a,\delta)$ intersects $ S^{\infty}_d(a_n, \delta)$. Hence $a\in S^{\infty}_d(a_n, \delta)$. Since \( \{ S^{\infty}_d(a_n, \delta) : n \in \mathbb{N} \} \) is pairwise disjoint, $a\in S^{\infty}_d(a_n, \frac{\delta}{n})$. Thus there exists \( f \in C(X, \mathbb{R}) \) that extends each \( f_n \). This function is also not uniformly locally component bounded on the set of terms.

$(4)\Rightarrow  (5)$ This is an immediate consequence of \cite[Theorem 28.10.]{BeerBook}.

 $(5) \Rightarrow  (1) $ Suppose \( A \) satisfies condition $(5)$. It is sufficient to show that \( A \) satisfies condition $(2)$ of Theorem \ref{coverconditions}. Let \( \mathcal{V} \) be an open cover of \( X \). If \( \operatorname{nslc}(X) \cap \cl{A} = \emptyset \), then \(
\inf\{ f_c(a) : a \in A \} > 0. \)
Take \( \mu > 0 \) such that for all \( a \in A \), \( S^{\infty}_d(a, \mu) \) is compact. Then whenever $E\subseteq S^{\infty}_d(a, \mu)$ for some $a\in A$, then \( E \) lies in the union of finitely many members of the cover.

Otherwise, \( \operatorname{nslc}(X) \cap \cl{A} \) is nonempty and compact. Let \( \mathcal{V}_1 \) be a finite subfamily of \( \mathcal{V} \) such that
\[
\operatorname{nslc}(X) \cap \cl{A} \subseteq \bigcup \mathcal{V}_1.
\]
By compactness, there exists \( \varepsilon > 0 \) such that
\[
S_d(\text{nlc}(X) \cap \cl{A}, \varepsilon) \subseteq \bigcup \mathcal{V}_1.
\]
Clearly, for all \( x \in S_d(\text{nlc}(X) \cap \cl{A}, \varepsilon/2) \), we have
\[
S_d\left(x, \frac{\varepsilon}{2}\right) \subseteq \bigcup \mathcal{V}_1.
\]
By the given condition, there exists \( \delta > 0 \) such that for all \( a \in A \),
\[
d(a, \operatorname{nslc}(X) \cap \cl{A}) > \frac{\varepsilon}{3} \quad \Rightarrow \quad f_c(a) > \delta.
\]
Set
\(
\mu = \min\left\{ \delta, \frac{\varepsilon}{3} \right\}
\). Fix $a\in A$. Then $S_d(a, \mu)\subseteq S_d(a, \frac{\varepsilon}{3})$. Thus $f_c(a)>\delta\geq\mu$. Hence $S_d^{\infty}(a, \mu)$ is compact, and so is contained in a finite union of members of \( \mathcal{V} \).

\end{proof}

\begin{corollary}

Each Boubaki quasi-Cauchy sequence in a nonempty uniformly star superparacompact subset \( A \) of a metric space \( \langle X, d \rangle \) clusters. As a result, \( \cl{A} \) as a metric subspace of \( \langle X, d \rangle \) is Bourbaki quasi-complete.   
\end{corollary}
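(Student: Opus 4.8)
The plan is to deduce the first assertion from condition~(4) of Theorem~\ref{functional_char}, which characterizes uniform star superparacompactness of $A$ by the clustering of every sequence in $A$ along which $f_c$ tends to $0$. So I would let $\langle a_n\rangle$ be a Bourbaki quasi-Cauchy sequence in $A$, suppose for contradiction that it has no cluster point, and split into two cases according to the value of $\liminf_{n\to\infty} f_c(a_n)$.

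If $\liminf_{n\to\infty} f_c(a_n)=0$, then some subsequence $\langle a_{n_k}\rangle$ satisfies $\lim_{k\to\infty} f_c(a_{n_k})=0$; by condition~(4) of Theorem~\ref{functional_char} this subsequence clusters, and hence so does $\langle a_n\rangle$, contradicting our assumption. The substance of the argument therefore lies in the complementary case, which I expect to be the main obstacle.

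Suppose $\liminf_{n\to\infty} f_c(a_n)=c>0$. I would fix $\eta=c/2$ and $n_0$ with $f_c(a_n)>\eta$ for all $n\ge n_0$. Applying the Bourbaki quasi-Cauchy condition with radius $\eta$ yields $n_1\in\mathbb{N}$ and $p\in X$ with $a_n\in S^{\infty}_d(p,\eta)$ for all $n\ge n_1$. Setting $N=\max\{n_0,n_1\}$ and using that lying in a common $\eta$-chainable component is an equivalence relation, we obtain $S^{\infty}_d(p,\eta)=S^{\infty}_d(a_N,\eta)$. Since $f_c(a_N)>\eta$, there is $\varepsilon'>\eta$ for which $S^{\infty}_d(a_N,\varepsilon')$ is compact, and the inclusion $S^{\infty}_d(a_N,\eta)\subseteq S^{\infty}_d(a_N,\varepsilon')$ confines the entire tail $\{a_n:n\ge N\}$ to this compact set. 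An infinite subset of a compact set has a cluster point, contradicting our assumption; this trapping of the tail inside a single compact chainable component is the key step.

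For the second assertion I would show that $\cl{A}$ is Bourbaki quasi-complete by transferring a Bourbaki quasi-Cauchy sequence from $\cl{A}$ into $A$. Given such a sequence $\langle x_n\rangle$ in $\cl{A}$, choose $a_n\in A$ with $d(x_n,a_n)<1/n$. One checks that $\langle a_n\rangle$ is Bourbaki quasi-Cauchy in $X$: any $\varepsilon$-chain inside $\cl{A}$ is also one in $X$, and the bound $d(x_n,a_n)<1/n$ is absorbed by applying the hypothesis on $\langle x_n\rangle$ with radius $\varepsilon/2$ and passing to large $n$. By the first assertion $\langle a_n\rangle$ clusters in $X$; passing to a convergent subsequence and invoking $d(x_n,a_n)\to 0$ produces a cluster point of $\langle x_n\rangle$, which lies in $\cl{A}$ because $\cl{A}$ is closed. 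This establishes Bourbaki quasi-completeness of $\cl{A}$.
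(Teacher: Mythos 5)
Your proof is correct, and its core is the same as the paper's: both derive the first assertion by contradiction from condition~(4) of Theorem~\ref{functional_char}. The difference is one of explicitness rather than of route. The paper compresses the whole argument into the claim that a non-clustering Bourbaki quasi-Cauchy sequence admits a subsequence with $f_c(a_n)\leq \frac 1 n$; your case split on $\liminf_n f_c(a_n)$ supplies exactly the justification this compression hides, namely that if $f_c$ stays bounded below by $\eta>0$ along a tail, then the quasi-Cauchy condition at radius $\eta$ traps that tail in a single chainable component $S^{\infty}_d(a_N,\eta)\subseteq S^{\infty}_d(a_N,\varepsilon')$ with $\varepsilon'>\eta$ chosen so that the latter is compact, forcing a cluster point (one small wording point: argue via sequential compactness of the compact set rather than ``an infinite subset has a cluster point,'' since the tail could be a finite set of values; the conclusion is unaffected). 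For the second assertion the paper offers no argument at all beyond ``as a result,'' so your transfer argument --- approximating $x_n\in\cl{A}$ by $a_n\in A$ with $d(x_n,a_n)<\frac 1 n$, checking that quasi-Cauchyness survives at radius $\varepsilon/2$, and pulling the cluster point back into the closed set $\cl{A}$ --- is a genuine and valid filling-in. An alternative, slightly shorter route to the same end is to invoke the paper's earlier corollary that the uniformly star superparacompact subsets form a bornology with closed base: then $\cl{A}$ is itself such a subset, and any Bourbaki quasi-Cauchy sequence of the subspace $\cl{A}$ is Bourbaki quasi-Cauchy in $X$, hence clusters in $X$ by the first assertion, with cluster point necessarily in $\cl{A}$.
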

\begin{proof}
Suppose \( \langle a_n \rangle \) is a Boubaki quasi-Cauchy sequence in \( A \) that does not cluster. Then by passing to a subsequence, we can assume $f_c(a_n)\leq \frac 1 n$. Thus \( \lim_{n \to \infty} f_c(a_n) = 0 \), which contradicts condition $(4)$ of Theorem \ref{functional_char}.  
\end{proof}

Another immediate consequence of condition (4) of Theorem \ref{functional_char} is the following corollary:
\begin{corollary}
 If $X$ is a finite-dimensional normed linear space, then $A$ is compact if and only if $A$ is uniformly star superparacompact.   
\end{corollary}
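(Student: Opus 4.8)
The plan is to reduce the statement to a single structural fact: in a (finite-dimensional) normed linear space the functional $f_c$ is identically zero, so that the defining condition (4) of Theorem~\ref{functional_char} degenerates into ordinary relative sequential compactness of $A$. One implication needs no linear structure at all: if $A$ is compact, then every sequence in $A$ has a cluster point in $A\subseteq X$, so in particular every sequence along which $f_c$ tends to $0$ clusters, and condition (4) gives that $A$ is uniformly star superparacompact.

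For the converse the crux is to prove $f_c\equiv 0$ on $X$. First I would invoke the classical fact that a connected metric space is $\varepsilon$-chainable for every $\varepsilon>0$: for fixed $x$ and $\varepsilon$ the chainable component $S^{\infty}_d(x,\varepsilon)$ is open, being a union of open balls, and its complement is open as well, since any point lying within $\varepsilon$ of the component can be reached by extending an $\varepsilon$-chain and hence already belongs to the component; connectedness then forces $S^{\infty}_d(x,\varepsilon)=X$. A normed linear space is convex, hence connected, so $S^{\infty}_d(x,\varepsilon)=X$ for every $x$ and every $\varepsilon>0$. Since $X$ contains a full line $\{tv:t\in\mathbb{R}\}$ for any $v\neq 0$ it is unbounded, hence non-compact; therefore no $\varepsilon>0$ makes $S^{\infty}_d(x,\varepsilon)$ compact, and by definition $f_c(x)=0$ for every $x$, i.e.\ $\operatorname{nslc}(X)=X$.

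With $f_c\equiv 0$ the hypothesis $\lim_{n}f_c(a_n)=0$ in condition (4) is automatic, so uniform star superparacompactness forces every sequence in $A$ to cluster in $X$, which is exactly the statement that $\cl A$ is compact; the same conclusion drops out of condition (5), where $\cl A\cap\operatorname{nslc}(X)=\cl A$ must be compact while the second clause is vacuous because $A\setminus S_d(\cl A,\delta)=\emptyset$. This gives that $A$ is relatively compact, which under the closed reading of the statement is precisely compactness. I expect the only real obstacle to be the chainability-plus-unboundedness argument delivering $f_c\equiv 0$; after that the result is a direct reading of Theorem~\ref{functional_char}. I would also note that finite-dimensionality is not actually needed for the vanishing of $f_c$---that holds in any normed space---and enters only if one wishes to recast relative compactness of $A$ as boundedness through Heine--Borel.
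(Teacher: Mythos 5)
Your proof is correct and matches the paper's intended argument: the corollary is presented there as an immediate consequence of condition (4) of Theorem \ref{functional_char}, and your chainability-plus-unboundedness argument showing $f_c \equiv 0$ (equivalently $\operatorname{nslc}(X) = X$) on any nontrivial normed space is precisely the fact the paper itself invokes in its later remark that non-trivial normed linear spaces satisfy $\operatorname{nslc}(X)=X$. Your side observations—that finite-dimensionality is inessential to the vanishing of $f_c$, and that the backward implication literally yields relative compactness (compactness of $A$ under the closed reading)—are accurate and apply equally to the paper's own statement, so there is no gap.
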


\begin{corollary}
 Let \( A \) be a subset of a metric space \( \langle X, d \rangle \). Suppose each point of \( \cl{A} \) has a strongly locally compact chainable component in \( X \). Then \( A \) is uniformly star superparacompact if and only if there exists \( \mu > 0 \) such that for each \( a \in A \), the set $S_d^{\infty}(a, \mu)$ is compact.   
\end{corollary}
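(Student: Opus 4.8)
The plan is to read off both implications from the characterizations already established in Theorem \ref{coverconditions} and Theorem \ref{functional_char}, with the hypothesis serving only to eliminate the exceptional set \( \cl{A} \cap \operatorname{nslc}(X) \). First I would translate the hypothesis into the language of \( f_c \): to say that every point of \( \cl{A} \) has a (strongly locally compact, i.e.\ compact) chainable component is precisely to say that \( f_c(x) > 0 \) for each \( x \in \cl{A} \), equivalently \( \cl{A} \cap \operatorname{nslc}(X) = \emptyset \).

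For the direction ``uniform compact radius \( \Rightarrow \) uniformly star superparacompact'' I would verify condition \( (2) \) of Theorem \ref{coverconditions} directly. Given an open cover \( \mathcal{V} \) of \( X \), take the given \( \mu \); whenever \( E \subseteq S_d^{\infty}(a, \mu) \) for some \( a \in A \), the set \( S_d^{\infty}(a, \mu) \) is compact and hence covered by finitely many members of \( \mathcal{V} \), which then cover \( E \) as well. This implication does not use the hypothesis and simply reproduces the first case of the proof of \( (5) \Rightarrow (1) \) in Theorem \ref{functional_char}.

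For the converse, suppose \( A \) is uniformly star superparacompact. By the implication \( (1) \Rightarrow (5) \) of Theorem \ref{functional_char}, condition \( (5) \) holds; but since \( \cl{A} \cap \operatorname{nslc}(X) = \emptyset \), the enlargement \( S_d(\cl{A} \cap \operatorname{nslc}(X), \delta) \) is empty for every \( \delta > 0 \), so \( (5) \) collapses to the assertion that there is some \( \mu_0 > 0 \) with \( f_c(a) > \mu_0 \) for all \( a \in A \); that is, \( \inf\{ f_c(a) : a \in A \} \geq \mu_0 > 0 \). It then remains to convert this uniform lower bound on \( f_c \) into a uniform radius of compactness, and this is the single step that calls for an argument rather than a citation.

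That step rests on the observation that the \( \mu \)-chainable components of \( X \) partition \( X \) into clopen sets, so each \( S_d^{\infty}(a, \mu) \) is closed in \( X \). Consequently, if \( \mu < f_c(a) \), then by the definition of \( f_c \) as a supremum there exists \( \varepsilon \) with \( \mu < \varepsilon \leq f_c(a) \) and \( S_d^{\infty}(a, \varepsilon) \) compact; since \( S_d^{\infty}(a, \mu) \subseteq S_d^{\infty}(a, \varepsilon) \) is a closed subset of a compact set, it is itself compact. Applying this with \( \mu = \mu_0 \) and any \( a \in A \), for which \( \mu_0 < f_c(a) \), yields the desired uniform \( \mu \). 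I expect the main obstacle to be exactly this monotonicity lemma, specifically the justification that a chainable component of smaller radius is clopen (hence closed) in \( X \) so that the closed-subset-of-compact argument applies; the reduction of condition \( (5) \) and the easy direction are then routine.
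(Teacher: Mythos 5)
Your proposal is correct and follows essentially the same route as the paper: the paper's proof simply reads the corollary off condition $(5)$ of Theorem \ref{functional_char}, noting that the hypothesis means $\cl{A} \cap \operatorname{nslc}(X) = \emptyset$, which is exactly your reduction. The extra details you supply—the easy direction via condition $(2)$ of Theorem \ref{coverconditions}, and the monotonicity fact that $f_c(a) > \mu$ forces $S_d^{\infty}(a,\mu)$ to be compact because chainable components are clopen—are precisely the steps the paper treats as immediate (and uses implicitly in its proof of $(5) \Rightarrow (1)$), so they are a legitimate filling-in rather than a different argument.
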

\begin{proof}
 This is immediate from condition $(5)$ of Theorem \ref{functional_char} since \( \cl{A} \cap \operatorname{nslc}(X) = \emptyset \).   
\end{proof}

Note that if \( f_c(x_0) = \infty \) for some \( x_0 \in X \), then \( f_c(x) = \infty \) for all \( x \in X \), and if \( f_c \) is finite-valued function, then \( f_c \) is 1-Lipschitz function on $X$. Consequently, $f_c$ is strongly uniformly continuous on each non-empty uniformly star superparacompact subset of $X$. 

\begin{corollary}
 Let \( \langle X, d \rangle \) be a metric space. Then the bornology of uniformly star superparacompact subsets $\mathcal B^\text{uss}$ of \( X \) is shield from closed sets and contains UC-subsets of $X$. 
\end{corollary}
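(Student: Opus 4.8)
The statement bundles two claims: that every UC-subset of $X$ is uniformly star superparacompact, and that $\mathcal{B}^{\text{uss}}$ is shielded from closed sets. Recall that a superset $B_1\supseteq A$ is a \emph{shield} for $A$ when $D_d(A,C):=\inf\{d(a,c):a\in A,\ c\in C\}>0$ for every closed $C$ with $C\cap B_1=\emptyset$, and a bornology is shielded from closed sets when each member admits a shield lying in the bornology. The plan is to treat the inclusion by comparing functionals, and the shielding property by an explicit construction.

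For the inclusion, I would first check the pointwise bound $I(x)\le f_c(x)$ on $X$: if $x$ is a limit point then $I(x)=0\le f_c(x)$, while if $x$ is isolated and $0<\varepsilon\le I(x)$ then $S_d(x,\varepsilon)=\{x\}$, hence $S^n_d(x,\varepsilon)=\{x\}$ for all $n$ and $S^{\infty}_d(x,\varepsilon)=\{x\}$ is compact, giving $f_c(x)\ge I(x)$. Granting this, let $A$ be a UC-subset and let $\langle a_n\rangle$ in $A$ satisfy $f_c(a_n)\to 0$; then $0\le I(a_n)\le f_c(a_n)\to 0$, so $\langle a_n\rangle$ clusters by the defining property of UC-subsets, and condition $(4)$ of Theorem \ref{functional_char} yields that $A$ is uniformly star superparacompact. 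This part is routine.

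For the shielding property, set $K=\cl{A}\cap\operatorname{nslc}(X)$, which is compact by condition $(5)$ of Theorem \ref{functional_char}. I would use the continuous radius $r(x)=\tfrac12\min\{f_c(x),1\}$ (so $r(x)=0$ exactly on $\operatorname{nslc}(X)$) and take
\[
B_1=\cl{A}\ \cup\ \bigcup_{a\in A}S_d\bigl(a,r(a)\bigr),
\]
which clearly contains $A$. To see $B_1$ shields $A$, suppose $C$ is closed, $C\cap B_1=\emptyset$, yet $D_d(A,C)=0$, so there are $a_n\in A$, $c_n\in C$ with $d(a_n,c_n)\to 0$. If $\liminf f_c(a_n)=0$, pass to a subsequence with $f_c(a_{n_k})\to 0$; by uniform star superparacompactness of $A$ it clusters at some $p$, continuity of $f_c$ forces $p\in K\subseteq B_1$, while $c_{n_k}\to p$ and $C$ closed give $p\in C$, contradicting $C\cap B_1=\emptyset$. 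If instead $\liminf f_c(a_n)=\beta>0$, then eventually $r(a_n)\ge\tfrac12\min\{\beta/2,1\}>0$ while $d(a_n,c_n)\to 0$, so $c_n\in S_d(a_n,r(a_n))\subseteq B_1$, again a contradiction.

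It remains to verify $B_1\in\mathcal{B}^{\text{uss}}$ via condition $(4)$: take $x_n\in B_1$ with $f_c(x_n)\to 0$; a subsequence lies either in $\cl{A}$, which is uniformly star superparacompact since $\mathcal{B}^{\text{uss}}$ has a closed base and hence clusters, or in some $S_d(a_n,r(a_n))$ with $a_n\in A$, where $d(x_n,a_n)<r(a_n)\le\tfrac12 f_c(a_n)$ and the $1$-Lipschitz property of $f_c$ force $f_c(a_n)<2f_c(x_n)\to 0$, so $a_n$ clusters and $d(x_n,a_n)\to 0$ carries $x_n$ to the same limit. The main obstacle is precisely this choice of $B_1$: a plain $\varepsilon$-enlargement of $A$ need not remain in $\mathcal{B}^{\text{uss}}$ (enlarging a compact set in $\ell^2$ already leaves the relatively compact sets), whereas $\cl{A}$ alone fails to shield when $A$ carries non-clustering sequences along which $f_c$ stays bounded away from $0$. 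The variable radius $r$ reconciles the two regimes—it degenerates $B_1$ to $\cl{A}$ near the compact kernel $K$, where compactness supplies the gap, and provides a genuine enlargement off the kernel, where $r$ is bounded below along the relevant sequences. Simultaneously establishing that this $B_1$ shields $A$ and stays uniformly star superparacompact is the crux; both reductions rest on the $1$-Lipschitz property of $f_c$ and condition $(4)$ of Theorem \ref{functional_char}. (The case $f_c\equiv\infty$ is trivial, since then $\operatorname{nslc}(X)=\emptyset$ and no sequence has $f_c\to 0$.)
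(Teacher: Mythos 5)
Your proposal is correct, but for the shielding half it takes a genuinely different route from the paper. The inclusion of UC-subsets is handled identically in both: the pointwise bound $I(x)\le f_c(x)$ (your verification via $S_d(x,\varepsilon)=\{x\}\Rightarrow S^{\infty}_d(x,\varepsilon)=\{x\}$ is the right and easy one) combined with condition (4) of Theorem \ref{functional_char}. For the shield, the paper does not construct anything: it observes that when $f_c$ is finite-valued it is $1$-Lipschitz, hence strongly uniformly continuous on each member of $\mathcal B^\text{uss}$, and then simply invokes the general result (3) of Proposition 28.13 in Beer's book, which says that a bornology of the form $\{A:\ \text{every sequence in }A\text{ along which the functional tends to }0\text{ clusters}\}$ is shielded from closed sets whenever the defining functional is strongly uniformly continuous on members (the degenerate case $f_c\equiv\infty$ is dispatched by taking $X$ itself as the shield, just as you dispatch it by noting the bornology is all of $2^X$). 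Your argument, by contrast, is a self-contained reproof of that citation in this special case: the variable-radius enlargement $B_1=\cl{A}\cup\bigcup_{a\in A}S_d(a,r(a))$ with $r=\tfrac12\min\{f_c,1\}$, the dichotomy on $\liminf f_c(a_n)$ to show $B_1$ shields $A$, and the $1$-Lipschitz estimate $f_c(a_n)<2f_c(x_n)$ to show $B_1\in\mathcal B^\text{uss}$ are all sound, and they rest on exactly the two facts the paper's citation rests on — the Lipschitz character of finite-valued $f_c$ and the sequential criterion (4). What the paper's route buys is brevity and placement of the result inside Beer's general framework of functional bornologies; what your route buys is an explicit, concrete shield and independence from the external proposition, at the cost of redoing its proof (your mention of compactness of $K=\cl{A}\cap\operatorname{nslc}(X)$ is harmless but never actually used — only $K\subseteq\cl{A}\subseteq B_1$ matters).
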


\begin{proof}
If $f_c(x)=\infty$ for some $x\in X$, then $\mathcal B^\text{uss}=2^X$ and $X$ will be the required superset for each member of the boronology. If $f_c$ is finite-valued, then $f_c$ is strongly uniformly continuous on $\mathcal B^\text{uss}$. Hence by (3) of \cite[Proposition 28.13]{BeerBook}, we conclude that $\mathcal B^\text{uss}$ is shield from closed sets. That each UC-subset is uniformly star superparacompact follows from the fact that $I(x)\leq f_c(x)$ and in view of condition (4) of Theorem \ref{functional_char}.
\end{proof}

%\begin{example}
%Let $X=\mathbb R$ with the metric $d(x, y)=\frac{1}{2}$ if $x\neq y$ and zero otherwise. Let $A=[0, 1]$. Then $A$ is not a UC-subset. However, for each $x\in X$ 
%\end{example}

\begin{remark}
In view of condition $(2)$ of Theorem \ref{coverconditions} it follows from condition $(2)$ of Theorem 29.1. \cite{BeerBook} that each uniformly star superparacompact subset is uniformly paracompact. Therefore, we have the following proposition.
 \end{remark}

\begin{proposition}
 Let \( A \) be a nonempty subset of a metric space \( \langle X, d \rangle \). The following conditions are equivalent:
\begin{enumerate}
 \item \( \langle A, d \rangle \) is a compact metric space;
 \item \( A \) is a UC-subset of each metric space in which it is isometrically embedded.
 \item \( A \) is a uniformly star superparacompact subset of each metric space in which it is isometrically embedded.
\item \( A \) is a uniformly paracompact subset of each metric space in which it is isometrically embedded;

\end{enumerate}
   
\end{proposition}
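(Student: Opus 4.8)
The plan is to establish the four conditions as a single cycle $(1)\Rightarrow(2)\Rightarrow(3)\Rightarrow(4)\Rightarrow(1)$, in which the first three implications are little more than bookkeeping over per-space facts already proved, while the closing implication $(4)\Rightarrow(1)$ is the only step requiring real work.

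For $(1)\Rightarrow(2)$ I would argue directly: if $\langle A,d\rangle$ is compact and $A$ is isometrically embedded in an arbitrary metric space $\langle Z,\rho\rangle$, then every sequence in $A$ already has a cluster point in $A\subseteq Z$ by compactness, regardless of the behaviour of the isolation functional $I$. Hence the defining condition of a UC-subset is satisfied vacuously strongly, and since $Z$ was arbitrary, $(2)$ holds. For $(2)\Rightarrow(3)$ and $(3)\Rightarrow(4)$ I would invoke implications that hold \emph{inside a fixed ambient space}: by the Corollary asserting that every UC-subset is uniformly star superparacompact (which rests on $I(x)\le f_c(x)$ together with condition $(4)$ of Theorem \ref{functional_char}), a UC-subset of a given space is uniformly star superparacompact in that space; and by the Remark (via condition $(2)$ of Theorem \ref{coverconditions}), a uniformly star superparacompact subset of a given space is uniformly paracompact there. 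Applying these in every metric space in which $A$ is isometrically embedded converts $(2)$ into $(3)$ and $(3)$ into $(4)$.

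For $(4)\Rightarrow(1)$ I would argue by contraposition: assuming $A$ is not compact, I construct one isometric embedding in which $A$ fails to be uniformly paracompact. Since a non-compact metric space is not sequentially compact, fix a sequence $\langle a_n\rangle$ in $A$ with no cluster point in $A$; deleting repeated values we may take the $a_n$ distinct. I then enlarge $A$ to $Y=A\cup\{c_{n,k}:n,k\in\mathbb{N}\}$ by attaching at each $a_n$ countably many pendant points $c_{n,1},c_{n,2},\dots$, each joined to $a_n$ by an edge of length $\tfrac1n$; formally, I take the shortest-path metric determined by $d$ on $A$ together with $d(c_{n,k},x)=\tfrac1n+d(a_n,x)$ for $x\in A$, $d(c_{n,k},c_{n,k'})=\tfrac2n$ for $k\ne k'$, and $d(c_{n,k},c_{m,j})=\tfrac1n+d(a_n,a_m)+\tfrac1m$ for $n\ne m$. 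This $d$ restricts to the original metric on $A$, so $A$ sits isometrically inside $Y$. Two properties must then be checked. First, $\nu(a_n)\le\tfrac1n\to0$ in $Y$: for any $\varepsilon>\tfrac1n$ the ball $S_d(a_n,\varepsilon)$ contains the infinite, $\tfrac2n$-uniformly discrete set $\{c_{n,k}:k\in\mathbb{N}\}$, whose closure is non-compact, so no ball about $a_n$ of radius exceeding $\tfrac1n$ has compact closure. Second, $\langle a_n\rangle$ still has no cluster point in $Y$: a cluster point in $A$ is excluded by the choice of the sequence, while a cluster point at some $c_{m,j}$ is impossible because every $a_n$ lies at distance at least $\tfrac1m$ from $c_{m,j}$. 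Consequently $\langle a_n\rangle$ is a sequence in $A$ with $\lim_{n\to\infty}\nu(a_n)=0$ that does not cluster in $Y$, so $A$ is not a uniformly paracompact subset of $Y$, contradicting $(4)$; hence $A$ must be compact.

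The main obstacle I anticipate is the bookkeeping behind the enlarged metric rather than any conceptual difficulty: verifying that the displayed formulas genuinely define a metric on $Y$ (the triangle inequality in its several cases, each reducing to the triangle inequality in $A$ because the pendant lengths add along paths) and confirming that attaching the gadgets creates no spurious cluster point of $\langle a_n\rangle$. Once these routine verifications are secured, the contradiction with $(4)$ is immediate and the cycle closes.
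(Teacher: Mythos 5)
Your proof is correct, and its skeleton --- the cycle $(1)\Rightarrow(2)\Rightarrow(3)\Rightarrow(4)\Rightarrow(1)$, with $(3)$ sandwiched between $(2)$ and $(4)$ via the per-ambient-space implications ``UC-subset $\Rightarrow$ uniformly star superparacompact subset'' (the corollary resting on $I(x)\le f_c(x)$ and condition $(4)$ of Theorem \ref{functional_char}) and ``uniformly star superparacompact subset $\Rightarrow$ uniformly paracompact subset'' (the remark preceding the proposition) --- is exactly the argument the paper has in mind; the paper in fact prints no proof at all, just the word ``Therefore'' after that remark. The genuine difference is at the two endpoints of the cycle. The paper implicitly outsources $(1)\Leftrightarrow(2)$ and $(1)\Leftrightarrow(4)$ to the known results on UC-subsets and CC-subsets (\cite{Beer2009, Beer2012}, Theorem 29.1 of \cite{BeerBook}), whereas you prove $(1)\Rightarrow(2)$ directly (trivially, since every sequence in a compact set clusters) and, more substantially, prove $(4)\Rightarrow(1)$ from scratch by a broom/hedgehog construction: to each term $a_n$ of a non-clustering sequence of distinct points you attach an infinite bouquet $\{c_{n,k}: k\in\mathbb N\}$ of pendant points at distance $\tfrac1n$ from $a_n$ and pairwise distance $\tfrac2n$, so that in the enlarged space $Y$ one has $\nu(a_n)\le\tfrac1n$ while no new cluster points are created, contradicting the sequential $\nu$-definition of uniformly paracompact subset from \cite{Beer2012} that the paper adopts. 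Your verification is sound (the triangle inequality does reduce case by case to that of $A$, the set $\{c_{n,k}:k\in\mathbb N\}$ is closed, infinite and uniformly discrete, so every ball $S_d(a_n,\varepsilon)$ with $\varepsilon>\tfrac1n$ has non-compact closure, and no $c_{m,j}$ can be a cluster point since $d(a_n,c_{m,j})\ge\tfrac1m$); the one pedantic point is that one ``deletes repeated values'' by noting that a value repeated infinitely often would itself be a cluster point. What your route buys is a self-contained, elementary proof independent of the cited bornology literature, at the cost of the metric bookkeeping you acknowledge; what the paper's route buys is brevity, since for it the only new content is the sandwiching of $(3)$.
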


\begin{theorem}
Let \( A \) be a nonempty subset of a metric space \( \langle X, d \rangle \). The following conditions are equivalent:
\begin{enumerate}
    \item \( A \in \mathcal B^\text{uss} \);
    \item whenever \( \rho \) is a metric equivalent to \( d \) on \( X \), there exists \( \mu > 0 \) such that for all \( a \in A \), \( S^{\infty}_d(a, \mu) \) is \( \rho \)-bounded.
\end{enumerate}
\end{theorem}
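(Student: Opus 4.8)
The plan is to prove the two implications separately, both times invoking the continuous-function characterization from Theorem \ref{functional_char}, where condition $(1)$ is equivalent to the assertions that every continuous map, respectively every real-valued continuous function, is uniformly locally component bounded on $A$ (conditions $(2)$ and $(3)$ there).

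For $(1)\Rightarrow(2)$ I would argue directly. Given a metric $\rho$ equivalent to $d$, the identity map $\operatorname{id}\colon \langle X, d\rangle \to \langle X, \rho\rangle$ is continuous, since $\rho$ and $d$ induce the same topology. Applying the implication $(1)\Rightarrow(2)$ of Theorem \ref{functional_char} to $\operatorname{id}$ produces $\mu > 0$ with $\operatorname{id}(S^{\infty}_d(a,\mu)) = S^{\infty}_d(a,\mu)$ a $\rho$-bounded set for every $a \in A$, which is precisely condition $(2)$. This direction is essentially immediate.

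For $(2)\Rightarrow(1)$ I would proceed by contraposition. If $A$ is not uniformly star superparacompact, then by the equivalence of $(1)$ and $(3)$ in Theorem \ref{functional_char} there is some $f \in C(X, \mathbb{R})$ that fails to be uniformly locally component bounded on $A$; that is, for every $\mu > 0$ there exists $a \in A$ with $f(S^{\infty}_d(a,\mu))$ unbounded in $\mathbb{R}$. I would then build a witnessing equivalent metric by setting $\rho(x,y) = \min\{d(x,y),1\} + |f(x) - f(y)|$, the sum of a bounded metric and the pseudometric induced by $f$.

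Two routine verifications complete the argument. First, $\rho$ is a metric topologically equivalent to $d$: a sequence $\rho$-converges to a point exactly when it both $\min\{d,1\}$-converges and has $f$-images converging, and since $\min\{d,1\}$ is equivalent to $d$ and $f$ is $d$-continuous, this coincides with $d$-convergence. Second, for any $S \subseteq X$ one has $\operatorname{diam}(f(S)) \le \operatorname{diam}_\rho(S) \le 1 + \operatorname{diam}(f(S))$, so $S$ is $\rho$-bounded if and only if $f(S)$ is bounded. Hence for each $\mu > 0$ the witnessing set $S^{\infty}_d(a,\mu)$ is not $\rho$-bounded, so condition $(2)$ fails for this $\rho$, as required. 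The only delicate point is extracting the unbounded continuous function and certifying that $\rho$ is topologically (not merely uniformly) equivalent to $d$; the former is supplied by the construction inside the proof of Theorem \ref{functional_char}, and the latter follows from the sequential criterion above.
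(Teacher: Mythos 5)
Your proof is correct and follows essentially the same route as the paper: the forward direction applies Theorem \ref{functional_char}(2) to the identity map $\operatorname{id}\colon \langle X,d\rangle \to \langle X,\rho\rangle$, and the reverse direction uses Theorem \ref{functional_char}(3) to extract a continuous $f$ that is not uniformly locally component bounded on $A$ and then builds a witnessing equivalent metric from $|f(x)-f(y)|$. The only difference is cosmetic—the paper takes $\rho(x,w)=d(x,w)+|f(x)-f(w)|$ where you take $\min\{d,1\}+|f(x)-f(y)|$—and both yield the needed failure of $\rho$-boundedness, since in either case $\rho \ge |f(x)-f(y)|$ pointwise.
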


\begin{proof}
(1) \( \Rightarrow \) (2) Let \( A \in \mathcal B^\text{uss} \). Then by condition $(2)$ of Theorem \ref{functional_char}, the identity map \( I_X : \langle X, d \rangle \to \langle X, \rho \rangle \) is uniformly locally component bounded on \( A \), and we have condition $(2)$.

(2) \( \Rightarrow \) (1) Suppose to the contrary \( A \notin \mathcal B^\text{uss} \); then by condition $(3)$ of Theorem \ref{functional_char} there exists \( f \in C(X, \mathbb{R}) \) such that \( f \) fails to be uniformly locally component bounded on \( A \). Consider the equivalent metric \( \rho \) on \( X \) defined by
\[
\rho(x, w) = d(x, w) + |f(x) - f(w)|.
\]
For each \( \delta > 0 \), there exists \( a \in A \) such that \( f(S^{\infty}_d(a, \delta)) \) is an unbounded subset of \( \mathbb{R} \), and so \( S^{\infty}_d(a, \delta) \) is not $\rho$-bounded, which is a contradiction. This completes the proof of (2) \( \Rightarrow \) (1).
\end{proof}

We now give a simple characterization of qC-precompact subset of a metric space in terms of cofinally Bourbaki quasi-Cauchy sequence.

\begin{proposition}
Let \( A \) be a nonempty subset of a metric space \( \langle X, d \rangle \). The following conditions are equivalent:
\begin{enumerate}
    \item \( A \) is qC-precompact;
    \item each sequence in \( A \) is cofinally Bourbaki quasi-Cauchy;
    \item each sequence in \( A \) is Boubaki pseudo-Cauchy.
\end{enumerate}
\end{proposition}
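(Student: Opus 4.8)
The plan is to prove the cyclic chain $(1)\Rightarrow(2)\Rightarrow(3)\Rightarrow(1)$, using throughout the elementary fact that, for a fixed $\varepsilon>0$, the relation ``$x$ and $y$ can be joined by an $\varepsilon$-chain'' is an equivalence relation on $X$ whose equivalence classes are exactly the $\varepsilon$-chainable components $S^{\infty}_d(\cdot,\varepsilon)$. In particular, two points lie in a common set $S^{\infty}_d(p,\varepsilon)$ if and only if they are joined by an $\varepsilon$-chain, and the family $\{S^{\infty}_d(p,\varepsilon): p\in X\}$ partitions $X$. This observation is what glues all three conditions together, so I would record it explicitly at the outset.

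For $(1)\Rightarrow(2)$ I would take an arbitrary sequence $\langle x_n\rangle$ in $A$ and fix $\varepsilon>0$. By qC-precompactness, $A$ is covered by finitely many $\varepsilon$-chainable components $S^{\infty}_d(p_1,\varepsilon),\dots,S^{\infty}_d(p_k,\varepsilon)$, so every term lands in one of these finitely many sets; since the sequence has infinitely many terms, a pigeonhole argument produces an index $i$ with $x_n\in S^{\infty}_d(p_i,\varepsilon)$ for infinitely many $n$. Collecting these $n$ into an infinite set $N_\varepsilon$ and taking $p=p_i$ gives exactly the cofinally Bourbaki quasi-Cauchy condition.

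For $(2)\Rightarrow(3)$ I fix a sequence in $A$, together with $\varepsilon>0$ and $n\in\mathbb N$. The cofinal condition supplies an infinite set $N_\varepsilon$ and a point $p$ with $x_m\in S^{\infty}_d(p,\varepsilon)$ for all $m\in N_\varepsilon$. Because $N_\varepsilon$ is infinite, I can choose $k\in N_\varepsilon$ with $k\ge n$ and then $j\in N_\varepsilon$ with $j>k$; as $x_j$ and $x_k$ lie in the same $\varepsilon$-chainable component, they are joined by an $\varepsilon$-chain, which is precisely what the pseudo Bourbaki-Cauchy condition demands (with $n_0=n$). Note that no halving of $\varepsilon$ is needed here, since membership in a common component already yields a joining chain at scale $\varepsilon$.

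The step I expect to carry the real content is $(3)\Rightarrow(1)$, which I would argue by contraposition. If $A$ fails to be qC-precompact, there is some $\varepsilon>0$ for which $A$ cannot be covered by finitely many sets $S^{\infty}_d(p_i,\varepsilon)$; by the partition structure this means $A$ meets infinitely many distinct $\varepsilon$-chainable components. Choosing one point of $A$ from each of countably many of these components yields a sequence $\langle x_n\rangle$ whose terms lie in pairwise distinct components, so no two distinct terms are joined by an $\varepsilon$-chain. Hence, for this $\varepsilon$, the sequence violates the pseudo Bourbaki-Cauchy condition, contradicting $(3)$. The only care required is to confirm that ``not qC-precompact at scale $\varepsilon$'' is genuinely equivalent to ``$A$ meets infinitely many classes of the $\varepsilon$-chain relation,'' and to pick the representatives so that they sit in distinct classes; both are immediate from the fact that the components partition $X$.
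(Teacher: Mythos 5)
Your proof is correct and follows essentially the same route as the paper: a pigeonhole argument for $(1)\Rightarrow(2)$, a direct verification of $(2)\Rightarrow(3)$ (which the paper dismisses as obvious), and for $(3)\Rightarrow(1)$ the construction of a sequence whose terms lie in pairwise distinct $\varepsilon$-chainable components, hence admit no joining $\varepsilon$-chains. Your explicit use of the fact that the $\varepsilon$-chain relation is an equivalence relation whose classes are the sets $S^{\infty}_d(\cdot,\varepsilon)$ merely makes transparent what the paper's inductive choice of points $a_{n+1}\notin\bigcup_{i=1}^{n}S^{\infty}_d(a_i,\varepsilon_0)$ accomplishes implicitly.
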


\begin{proof}
(1) \( \Rightarrow \) (2) Suppose \( A \) is qC-precompact and \( \langle a_n \rangle \) is a sequence in \( A \). Let \( \varepsilon > 0 \), and choose a finite subset \( \{p_1,\ldots,p_k\} \) of \( X \) such that \( A \subseteq \bigcup_{i=1}^k S^{\infty}_d(p_i, \varepsilon) \). Then there exists an infinite subset \( N_\varepsilon \) of \( \mathbb{N} \) and $j\in \{1,\ldots k\} $ with $a_n\in S^{\infty}_d(p_j, \varepsilon)$ for each \( n \in N_\varepsilon \). Consequently, \( \langle a_n \rangle \) is cofinally Boubaki quasi-Cauchy. 

(2) \( \Rightarrow \) (3) This is obvious.

(3) \( \Rightarrow \) (1) Suppose to the contrary if (1) fails, then for some \( \varepsilon_0 > 0 \) there exists no finite subset \( F \) of \( A \) for which \( A \subseteq S^{\infty}_d(F, \varepsilon_0) \). Let \( a_1 \in A \). Choose \( a_2 \in A \) such that $a_1$ and $a_2$ cannot be joined by an $\varepsilon_0$-chain. We can inductively find \( a_3, a_4, \ldots \) in \( A \) with \( a_{n+1} \notin \bigcup_{i=1}^n S^{\infty}_d(a_i, \varepsilon) \) for each \( n \in \mathbb{N} \), and the sequence \( \langle a_n \rangle \) so constructed is not Boubaki pseudo-Cauchy, which is a contradiction.
\end{proof}

When \( A \) is the entire metric space, we have seen that clustering of each cofinally quasi-Cauchy sequence is necessary and sufficient for uniform star superparacompactness of the space \cite[Theorem 4.2]{Das2024}. 

\begin{proposition}\label{first_ncsc}
 For a non-empty subset $A$ of a metric space \( \langle X, d \rangle \). The following conditions are equivalent:
 \begin{enumerate}
    \item \( A \) is uniformly star superparacompact;
    \item whenever \( \langle x_n \rangle \) is a sequence in \( X \) with both \(\lim_{n \to \infty} d(x_n, A) = 0\) and
    \\ \( \lim_{n \to \infty} f_c(x_n) = 0\), then \( \langle x_n \rangle \) clusters.   
\end{enumerate}    
\end{proposition}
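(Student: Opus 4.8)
The plan is to prove the equivalence by establishing both implications directly, exploiting the characterization of uniform star superparacompactness via clustering of sequences with $f_c \to 0$ that was already recorded as condition $(4)$ of Theorem \ref{functional_char}. The direction $(2)\Rightarrow(1)$ is the easy one: given a sequence $\langle a_n\rangle$ in $A$ with $\lim_{n\to\infty} f_c(a_n)=0$, I simply set $x_n = a_n$, so trivially $d(x_n,A)=0$ and the hypothesis of $(2)$ is met; hence $\langle x_n\rangle$ clusters, which is exactly condition $(4)$ of Theorem \ref{functional_char}, giving $(1)$.

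\medskip

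For the substantive direction $(1)\Rightarrow(2)$, assume $A$ is uniformly star superparacompact and let $\langle x_n\rangle$ be a sequence in $X$ with $d(x_n,A)\to 0$ and $f_c(x_n)\to 0$. The natural idea is to manufacture a companion sequence $\langle a_n\rangle$ lying in $A$ that inherits the property $f_c(a_n)\to 0$, and then transfer the clustering of $\langle a_n\rangle$ (guaranteed by condition $(4)$) back to $\langle x_n\rangle$. For each $n$, since $d(x_n,A)\to 0$, I would choose $a_n\in A$ with $d(x_n,a_n)< \max\{d(x_n,A),\tfrac1n\}$ (or any quantity tending to $0$), so that $d(x_n,a_n)\to 0$. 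The crucial control is on $f_c$: because $f_c$ is either identically $+\infty$ or finite-valued and $1$-Lipschitz (as noted in the remark preceding the UC-subset corollary), in the finite-valued case we get $|f_c(a_n)-f_c(x_n)|\le d(x_n,a_n)\to 0$, whence $f_c(a_n)\to 0$. (If $f_c\equiv+\infty$ the hypothesis $f_c(x_n)\to 0$ is vacuously impossible, so that case is trivial or must be handled by noting there is nothing to prove.)

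\medskip

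Having secured $f_c(a_n)\to 0$ with $\langle a_n\rangle$ in $A$, condition $(4)$ of Theorem \ref{functional_char} yields a cluster point $p$ of $\langle a_n\rangle$: there is a subsequence $\langle a_{n_k}\rangle$ converging to $p$. Since $d(x_{n_k},a_{n_k})\to 0$, the triangle inequality gives $d(x_{n_k},p)\le d(x_{n_k},a_{n_k})+d(a_{n_k},p)\to 0$, so $\langle x_{n_k}\rangle$ converges to $p$ as well, and $p$ is a cluster point of $\langle x_n\rangle$. This establishes $(2)$.

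\medskip

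I expect the main obstacle to be the correct handling of the $1$-Lipschitz transfer of $f_c$ and the degenerate case $f_c\equiv+\infty$. The Lipschitz estimate on $f_c$ is the linchpin that lets the smallness of $f_c(x_n)$ pass to $f_c(a_n)$; without it, nearness in the metric would not guarantee nearness of the functional values, and the whole argument would collapse. I would therefore state explicitly at the outset that we work in the finite-valued case (the only case where the hypothesis $f_c(x_n)\to0$ is satisfiable once $X$ is assumed not strongly locally compact everywhere with infinite $f_c$), and invoke the $1$-Lipschitz property to justify $f_c(a_n)\to 0$ cleanly.
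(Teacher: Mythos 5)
Your proof is correct and is essentially the paper's own argument: in both, one picks $a_n\in A$ with $d(x_n,a_n)\to 0$, transfers $f_c(x_n)\to 0$ to $f_c(a_n)\to 0$ via the $1$-Lipschitz (equivalently, strongly uniformly continuous) character of a finite-valued $f_c$, and then invokes condition (4) of Theorem \ref{functional_char}, the reverse implication being immediate from that same condition. Two small remarks: the choice $d(x_n,a_n)<\max\{d(x_n,A),\tfrac1n\}$ may be unattainable when $d(x_n,A)\ge \tfrac1n$ (use $d(x_n,a_n)<d(x_n,A)+\tfrac1n$ instead), and your case split on $f_c\equiv+\infty$ versus $f_c$ finite-valued is actually sounder than the paper's split on compactness of $X$, since $f_c\equiv+\infty$ can occur on a non-compact space such as $\{2^n : n\in\mathbb{N}\}$ with the usual metric.
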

\begin{proof}
 (1) \( \Rightarrow \) (2) Firstly, suppose $X$ is compact, then it is obvious. Suppose $X$ is not compact, then $f_c$ is finite-valued. Thus $f_c$ is strongly uniformly continuous on $A$. Since \(\lim_{n \to \infty} d(x_n, A) = 0\), then there exists a sequence \( \langle a_n \rangle \) in $A$ such that $d(x_n, a_n)\leq \frac 1 n.$ Then by strongly uniform continuity of $f_c$ on $A$, we have \( \lim_{n \to \infty} f_c(a_n) = 0\). Hence \( \langle a_n \rangle \) clusters, and so \( \langle x_n \rangle \) clusters.
 
(2) \( \Rightarrow \) (1) This is obvious in view of condition (4) of Theorem \ref{functional_char}.

\end{proof}

\begin{definition}
Let $A$ be a non-empty subset of a metric space \( \langle X, d \rangle \). A sequence  \( \langle x_n \rangle \) is said to be \textit{cofinally Bourbaki quasi-Cauchy with respect to $A$} if for each $\varepsilon>0$, there exist an infinite subset $N_{\varepsilon}$ of $\mathbb N$ and $a\in A$ such that $x_n\in S_d^{\infty}(a, \varepsilon)$ for all $n\in N_{\varepsilon}$.  
\end{definition}

\begin{proposition}
 For a non-empty subset $A$ of a metric space \( \langle X, d \rangle \). If \( A \) is uniformly star superparacompact, then whenever \( \langle x_n \rangle \) is a sequence in \( X \) with both \(\lim_{n \to \infty} d(x_n, A) = 0\) and cofinally Bourbaki quasi-Cauchy sequence with respect to $A$ in $X$ clusters.     
\end{proposition}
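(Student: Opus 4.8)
The plan is to argue by contradiction, first upgrading the non-clustering hypothesis into a \emph{uniform tail} lower bound on $f_c$ along the whole sequence, and then using the cofinal condition to trap infinitely many terms inside a single compact chainable component. So I would suppose that $\langle x_n \rangle$ is a sequence in $X$ with $\lim_{n} d(x_n, A) = 0$ that is cofinally Bourbaki quasi-Cauchy with respect to $A$, but that $\langle x_n \rangle$ has no cluster point, and aim for a contradiction.

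The first step would be to show that $\liminf_n f_c(x_n) > 0$. If this failed, I could pass to a subsequence $\langle x_{n_k} \rangle$ with $f_c(x_{n_k}) \to 0$; this subsequence still satisfies $d(x_{n_k}, A) \to 0$. Viewing $\langle x_{n_k} \rangle$ as a sequence in $X$ and invoking Proposition~\ref{first_ncsc} (applicable because $A$ is uniformly star superparacompact), the subsequence would have to cluster. But a cluster point of a subsequence of $\langle x_n \rangle$ is a cluster point of $\langle x_n \rangle$ itself, contradicting the standing assumption. Hence $\liminf_n f_c(x_n) > 0$, so I can fix $\eta > 0$ and $N_1 \in \mathbb{N}$ with $f_c(x_n) > \eta$ for every $n \ge N_1$.

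The second step would apply the cofinal hypothesis with $\varepsilon = \eta/2$, producing an infinite set $N \subseteq \mathbb{N}$ and a point $a^{\ast} \in A$ with $x_n \in S^{\infty}_d(a^{\ast}, \eta/2)$ for all $n \in N$. Choosing any $n^{\ast} \in N$ with $n^{\ast} \ge N_1$, the points $x_{n^{\ast}}$ and $a^{\ast}$ lie in the same $(\eta/2)$-chainable component, so $S^{\infty}_d(a^{\ast}, \eta/2) = S^{\infty}_d(x_{n^{\ast}}, \eta/2)$; and since $f_c(x_{n^{\ast}}) > \eta > \eta/2$, this component is compact (using that $f_c(x) > \varepsilon$ forces $S^{\infty}_d(x, \varepsilon)$ to be compact, which holds because $\varepsilon \mapsto S^{\infty}_d(x, \varepsilon)$ is increasing and each chainable component is clopen, hence a closed subset of a compact component). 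Thus the whole infinite set $\{ x_n : n \in N \}$ sits inside the compact set $S^{\infty}_d(a^{\ast}, \eta/2)$ and must have a cluster point, giving the desired contradiction.

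The hard part is the mismatch between the two hypotheses: the cofinal condition yields infinitely many terms inside one chainable component centered at an \emph{uncontrolled} point $a^{\ast} \in A$, whereas a lower bound on $f_c$ is naturally available only subsequentially from non-clustering. The reconciling observation is that compactness of a chainable component is a property of the component, not of its center, so a single term $x_{n^{\ast}}$ in that component with $f_c(x_{n^{\ast}})$ exceeding the radius already forces the entire component to be compact and to capture every term indexed by $N$. For this to work it is essential that $f_c(x_n) > \eta$ hold on an \emph{entire tail} (guaranteeing the existence of some $n^{\ast} \in N$ with $n^{\ast} \ge N_1$), which is exactly why the first step upgrades the merely subsequential information to a uniform tail bound via Proposition~\ref{first_ncsc}; condition~$(4)$ of Theorem~\ref{functional_char} could be used in place of Proposition~\ref{first_ncsc} after first reducing to a sequence in $A$, but routing through Proposition~\ref{first_ncsc} keeps the argument on the sequence $\langle x_n \rangle$ directly.
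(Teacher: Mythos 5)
Your proof is correct. It rests on exactly the same two ingredients as the paper's proof---Proposition~\ref{first_ncsc}, and the observation that $f_c(x)>\varepsilon$ forces $S^{\infty}_d(x,\varepsilon)$ to be compact (being clopen, hence closed inside some larger compact chainable component)---but it organizes them in the opposite order. The paper argues directly: it invokes the cofinal hypothesis at every scale $\tfrac{1}{n}$ simultaneously, notes for each $n$ that either the trapped subsequence clusters or (by that same compactness observation, applied to the center $a_n$) $f_c\le\tfrac{1}{n}$ on the trapped terms, and in the latter case extracts a diagonal subsequence $x^n_n$ along which $f_c\to 0$, which then clusters by Proposition~\ref{first_ncsc}. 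You instead argue by contradiction: non-clustering combined with Proposition~\ref{first_ncsc} (used contrapositively) upgrades to the uniform tail bound $f_c(x_n)>\eta$, after which a \emph{single} application of the cofinal hypothesis at scale $\eta/2$ traps infinitely many terms in one chainable component, compact because one tail term lies in it. Your route avoids the diagonalization and needs the cofinal hypothesis at only one scale; the paper's route is constructive in that it exhibits the clustering subsequence rather than deriving a contradiction. One cosmetic point: at the end you should say that the subsequence $\langle x_n\rangle_{n\in N}$ lies in a compact set and hence clusters, rather than that the ``infinite set'' $\{x_n : n\in N\}$ has a cluster point---the set of values could in principle be finite (in which case clustering is trivial anyway); the statement about the sequence is what you actually need.
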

 
\begin{proof}
 For each $n\in \mathbb N$, there exist an infinite subset $N_{\frac 1 n}$ of $\mathbb N$ and $a_n\in A$ such that $x^n_j\in S_d^{\infty}(a_n, \frac 1 n)$ for all $j\in N_{\frac 1 n}$. If for some $n\in\mathbb N$ $\langle x^n_j \rangle_{j\in N_{\frac 1 n}}$ clusters, then \( \langle x_k \rangle \) clusters. Otherwise, $f_c(a_n)\leq\frac 1 n$. Easily, for the subsequence \( \langle x_{k_n} \rangle \) of \( \langle x_k \rangle \) defined by $x_{k_n}=x^n_n$ we have $f_c(x_{k_n})\leq \frac 1 n.$ As a result,  \( \lim_{n \to \infty} f_c(x_{k_n}) = 0\). Hence by Proposition \ref{first_ncsc}, \( \langle x_{k_n} \rangle \) clusters. Thus, \( \langle x_n \rangle \) clusters. 
\end{proof}

%\begin{remark}
%The converse of the above proposition is not true in general to justify this we consider the following example.

  %Let \( \langle a_n \rangle \) be a sequence in $A$ such that \( \lim_{n \to \infty} f_c(a_{n}) = 0\). Now $S^{\infty}_d(a_n, f_c(a_n)+\frac{1}{n})$ is closed but not compact for each $n\in\mathbb N$. Then there exists a sequence $\langle y^n_j \rangle$ in $S^{\infty}_d(a_n, f_c(a_n)+\frac{1}{n})$ that is not Boubaki pseudo-Cauchy. Let $\{M_n:n\in\mathbb N\}$ be a partition of $\mathbb N$ into infinite subsets. Define a sequence \( \langle y_j \rangle \) by $y_j=y^n_j$ if $j\in M_n$. Then \( \langle y_j \rangle \) is cofinally Bourbaki quasi-Cauchy with respect to $A$. Now suppose we assume every cofinally Bourbaki quasi-Cauchy with respect to $A$ clusters. Clearly, \( \langle a_n \rangle \) is cofinally Boubaki quasi-Cauchy, and so it clusters. 
%\end{remark}

\section{Uniformly star superparacompact spaces}

We now consolidate the results from the previous section to provide a characterization of uniformly star superparacompact metric spaces, also known as cofinally Bourbaki quasi-complete metric spaces. The equivalence of all conditions, except for the final two, follows directly from the analysis conducted in the preceding section. The last two conditions are presented in \cite[Theorem 28.20]{BeerBook}. The equivalence of conditions (2), (8), and (9) have been proved in \cite[Theorem 4.2]{Das2024} also.

\begin{theorem}\label{equivalentconditionsforspace}
Let \(\langle X, d \rangle\) be a metric space. Then the following conditions are equivalent:
\begin{enumerate}
    \item \(X\) is a uniformly star superparacompact space;
    \item each cofinally Bourbaki quasi-Cauchy sequence in \(X\) clusters;
    \item for each locally finite open cover of \(X\), there exists \(\mu > 0\) such that for each \(x \in X\), \(S^{\infty}_d(x, \mu)\) hits at most finitely many members of the cover;
    \item for each open cover \(\mathcal V\) of \(X\), there exists \(\mu > 0\) such that for each \(x \in X\), \(S^{\infty}_d(x, \mu)\) has a finite subcover from \(\mathcal V\);
    \item for each open cover \(\mathcal V\) of \(X\) directed by inclusion, there exists \(\mu > 0\) such that \(\{S^{\infty}_d(x, \mu) : x \in X\}\) refines \(\mathcal V\);
    \item whenever \(f : \langle X, d \rangle \to \langle Y, \rho \rangle\) is continuous, \(f\) is uniformly locally component bounded on \(X\);
    \item whenever \(f \in C(X, \mathbb{R})\), \(f\) is uniformly locally component bounded on \(X\);
    \item \(nslc(X)\) is compact, and \(\forall \delta > 0\), \(\exists \mu > 0\) such that \(x \in X \setminus S_d(nlc(X), \delta) \Rightarrow f_c(x) > \mu\);
    \item each sequence \(\langle x_n \rangle\) in \(X\) such that \(\lim_{n \to \infty} f_c(x_n) = 0\) clusters;
    \item whenever \(\rho\) is a metric equivalent to \(d\) on \(X\), there exists \(\delta > 0\) such that \(\forall x \in X\), \(S^{\infty}_d(x, \delta)\) is \(\rho\)-bounded;
    \item whenever \(\langle A_n \rangle\) is a decreasing sequence of non-empty closed subsets of $X$ with \(\lim_{n \to \infty} \inf\{ f_c(x) : x \in A_n \} = 0\), then \(\bigcap_{n=1}^{\infty} A_n \neq \emptyset\);
    \item whenever \(\langle A_n \rangle\) is a decreasing sequence of non-empty closed subsets of $X$ with \(\lim_{n \to \infty} \sup\{ f_c(x) : x \in A_n \} = 0\), then \(\bigcap_{n=1}^{\infty} A_n \neq \emptyset\).
\end{enumerate}
\end{theorem}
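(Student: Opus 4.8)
The plan is to obtain almost all of the equivalences as the $A = X$ specialization of the subset theorems of Section~\ref{sec1}, and then to attach the sequence condition (2) and the Cantor-type conditions (11)--(12) as the only pieces requiring separate treatment. First I would match conditions against Theorem~\ref{coverconditions}: taking $A = X$ there, condition (1) is condition (1), condition (4) here is the finite-subcover condition (applied with $E = S^\infty_d(x,\mu)$ itself), condition (5) here is the directed-cover condition, and condition (3) here is the locally-finite-cover condition. This settles the equivalence of (1), (3), (4), (5) at once.

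Next I would specialize Theorem~\ref{functional_char} to $A = X$. Conditions (6), (7), and (9) are literally conditions (2), (3), and (4) of that theorem, while condition (8) is condition (5) after the simplification $\cl{X} \cap \operatorname{nslc}(X) = \operatorname{nslc}(X)$ (the stray $nlc$ in (8) being a typo for $\operatorname{nslc}$). This gives the equivalence of (1), (6), (7), (8), (9). The equivalent-metric theorem, specialized to $A = X$, supplies $(1) \Leftrightarrow (10)$ in the same way. Thus every condition except (2), (11), (12) is pinned to the hub (1)/(9) by citation of a Section~\ref{sec1} result.

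The genuinely new link is condition (2). For $(9) \Rightarrow (2)$ I would argue by contradiction: let $\langle x_n \rangle$ be cofinally Bourbaki quasi-Cauchy and non-clustering, and claim it has a subsequence along which $f_c \to 0$. Indeed, if $\inf_n f_c(x_n) = \alpha > 0$, fix $\varepsilon = \alpha/2$; cofinal Bourbaki quasi-Cauchyness yields an infinite $N \subseteq \mathbb{N}$ and a point $p$ with $x_n \in S^\infty_d(p,\varepsilon)$ for all $n \in N$. Since the $\varepsilon$-chainable components are clopen and partition $X$, we get $S^\infty_d(x_n,\varepsilon) = S^\infty_d(p,\varepsilon)$ for each such $n$, and $f_c(x_n) \geq \alpha > \varepsilon$ forces this common component to be compact; hence the infinitely many points $\{x_n : n \in N\}$ cluster, a contradiction. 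So some subsequence has $f_c \to 0$, (9) makes it cluster, and therefore $\langle x_n \rangle$ clusters. The reverse implication $(2) \Rightarrow (9)$, equivalently $(2) \Rightarrow (1)$, is the main obstacle: from a non-clustering sequence with $f_c \to 0$ one must \emph{manufacture} a non-clustering cofinally Bourbaki quasi-Cauchy sequence, and the construction splits according to whether the range of the sequence is qC-precompact (where the Proposition on qC-precompact subsets delivers the cofinal chain structure directly) or not (where a chain must be built inside a non-compact chainable component via an unbounded continuous function, as in the proof of $(3)\Rightarrow(4)$ of Theorem~\ref{functional_char}). Rather than reprove this delicate step, I would invoke \cite[Theorem 4.2]{Das2024}, where the equivalence of (2), (8), and (9) is established.

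Finally, conditions (11) and (12) are the decreasing-closed-set Kuratowski/Cantor-type characterizations in terms of $\inf$ and $\sup$ of $f_c$ along the sets; these are exactly \cite[Theorem 28.20]{BeerBook}, which I would cite to close the argument. The overall structure is a hub-and-spoke proof with (1)/(9) as the hub: every remaining condition is attached to it either by specializing a Section~\ref{sec1} theorem to $A = X$ or by a single external citation, with the one substantive new computation being the clean implication $(9) \Rightarrow (2)$.
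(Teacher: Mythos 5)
Your proposal is correct and mirrors the paper's own proof essentially exactly: the paper likewise obtains (1), (3)--(10) by specializing the Section 3 subset theorems (Theorem \ref{coverconditions}, Theorem \ref{functional_char}, and the equivalent-metric theorem) to $A = X$, cites \cite[Theorem 28.20]{BeerBook} for (11)--(12), and cites \cite[Theorem 4.2]{Das2024} for the equivalence of (2) with (8) and (9). Your bonus direct argument for $(9) \Rightarrow (2)$ goes beyond the paper (which relies entirely on the citation) and its key idea---clopenness of $\varepsilon$-chainable components plus monotonicity, so that $f_c(x_n) > \varepsilon$ forces $S^{\infty}_d(p,\varepsilon)$ to be compact---is sound, but you should phrase it with $\liminf_n f_c(x_n)$ rather than $\inf_n f_c(x_n)$ (or first discard the finitely many small terms), since $\inf_n f_c(x_n) = 0$ alone does not produce a subsequence along which $f_c \to 0$ when the infimum is attained at finitely many indices.
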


\begin{remark}
By condition (8), if \( \operatorname{nslc}(X) = \emptyset \) in a uniformly star superparacompact space, then \( \inf\{ f_c(x) : x \in X \}>0 \). As a result, a strongly locally compact space is uniformly star superparacompact if and only if it is strongly uniformly locally compact. We know that normed linear space is uniformly paracompact if and only if it is finite dimensional. However, non-trivial normed linear spaces are not uniformly star superparacompact since $\operatorname{nslc}(X)=X$.

%The open unit ball as a metric subspace of Euclidean space is an example of a locally compact space that is not uniformly paracompact. Also note that a normed linear space is uniformly paracompact if and only if it is finite dimensional, as in an infinite dimensional space we have \( \text{nlc}(X) = X \).
    
\end{remark}
 By condition (2), we conclude that.
\begin{corollary}
Each closed metric subspace of a uniformly star superparacompact metric space is uniformly star superparacompact.
\end{corollary}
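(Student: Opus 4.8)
The plan is to prove that a closed subspace $F$ of a uniformly star superparacompact space $X$ is itself uniformly star superparacompact by using the sequential characterization, namely condition (2) of Theorem \ref{equivalentconditionsforspace}: a metric space is uniformly star superparacompact if and only if each cofinally Bourbaki quasi-Cauchy sequence clusters. So I would let $\langle x_n\rangle$ be a cofinally Bourbaki quasi-Cauchy sequence in the subspace $\langle F, d\rangle$ and aim to show it clusters in $F$.

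\medskip

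First I would unpack what cofinally Bourbaki quasi-Cauchy means \emph{in the subspace} $F$: for every $\varepsilon>0$ there is an infinite $N_\varepsilon\subseteq\mathbb N$ and a point $p\in F$ with $x_n\in S^\infty_{d_F}(p,\varepsilon)$ for all $n\in N_\varepsilon$, where $S^\infty_{d_F}$ denotes the $\varepsilon$-chainable component computed using only points of $F$. The key observation is that an $\varepsilon$-chain inside $F$ is in particular an $\varepsilon$-chain in the ambient space $X$, so $S^\infty_{d_F}(p,\varepsilon)\subseteq S^\infty_d(p,\varepsilon)$. Hence the same sequence $\langle x_n\rangle$, viewed as a sequence in $X$, is cofinally Bourbaki quasi-Cauchy with respect to the ambient metric $d$. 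Since $X$ is uniformly star superparacompact, condition (2) applied to $X$ guarantees that $\langle x_n\rangle$ has a cluster point $x^*\in X$.

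\medskip

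It then remains to locate the cluster point inside $F$. Because $\langle x_n\rangle$ is a sequence in $F$ and $x^*$ is a cluster point in $X$, some subsequence converges to $x^*$; as $F$ is closed, $x^*\in F$. Thus $x^*$ is a cluster point of $\langle x_n\rangle$ relative to the subspace topology on $F$ as well, and the subspace satisfies condition (2), which by Theorem \ref{equivalentconditionsforspace} is equivalent to uniform star superparacompactness.

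\medskip

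The step I expect to require the most care is the inclusion $S^\infty_{d_F}(p,\varepsilon)\subseteq S^\infty_d(p,\varepsilon)$ and its role in transferring the cofinally Bourbaki quasi-Cauchy property from $F$ to $X$; one must check that both the chainable components and the infinite index set $N_\varepsilon$ are preserved when passing to the larger space, which they are since chains in $F$ remain valid chains in $X$ and the witnessing point $p$ lies in $F\subseteq X$. Everything else—clustering in $X$ via condition (2) and closedness of $F$—is routine, so the argument reduces essentially to this monotonicity of chainable components under subspace inclusion.
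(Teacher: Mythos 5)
Your proof is correct and takes exactly the route the paper intends: the paper presents this corollary as an immediate consequence of condition (2) of Theorem \ref{equivalentconditionsforspace}, and your argument fills in the (genuinely needed, if routine) details---that chains in the subspace remain chains in the ambient space, so a cofinally Bourbaki quasi-Cauchy sequence in the closed subspace is one in $X$, clusters there, and the cluster point lies in the subspace by closedness.
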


\begin{theorem}
Let $\langle X, d_1 \rangle$ and $\langle Y, d_2 \rangle$ be metric spaces, and equip $X \times Y$ with the box metric $\rho$. Then the product is uniformly star superparacompact if and only if one of the following conditions holds:
\begin{enumerate}
    \item either $\langle X, d_1 \rangle$ or $\langle Y, d_2 \rangle$ is compact and the other is uniformly star superparacompact, or
    \item both $\langle X, d_1 \rangle$ and $\langle Y, d_2 \rangle$ are strongly locally compact and uniformly star superparacompact.
\end{enumerate}
\end{theorem}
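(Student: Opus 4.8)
The plan is to reduce everything to the behaviour of the functional $f_c$ on the product by first establishing a structural lemma about chainable components in the box metric $\rho((x_1,y_1),(x_2,y_2)) = \max\{d_1(x_1,x_2), d_2(y_1,y_2)\}$. The key identity I would prove is
\[
S^\infty_\rho((x,y),\varepsilon) = S^\infty_{d_1}(x,\varepsilon) \times S^\infty_{d_2}(y,\varepsilon)
\]
for every $\varepsilon > 0$. The inclusion $\subseteq$ is immediate, since each coordinate of an $\varepsilon$-chain in the box metric is itself an $\varepsilon$-chain. For $\supseteq$ I would take an $\varepsilon$-chain of length $m$ joining $x$ to $x'$ and one of length $k$ joining $y$ to $y'$, pad the shorter one by repeating its final point (legitimate because $d(p,p)=0<\varepsilon$), and read off an $\varepsilon$-chain of common length $\max\{m,k\}$ in the product. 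Since the box metric induces the product topology and a product $A\times B$ of nonempty sets is compact iff both factors are compact, and since $\{\varepsilon>0 : S^\infty_d(z,\varepsilon)\text{ is compact}\}$ is a downward set, this yields the core formula
\[
f_c^{X\times Y}((x,y)) = \min\{f_c^X(x), f_c^Y(y)\},
\]
where a compact factor contributes $f_c\equiv\infty$. I would also record the elementary fact that a sequence clusters in the box-metric product iff some subsequence converges in both coordinates; in particular $\langle (x_0,y_n)\rangle$ with constant first coordinate clusters iff $\langle y_n\rangle$ clusters.

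For the forward implication I would use characterization (9) of Theorem \ref{equivalentconditionsforspace} throughout. First, since $X\times\{y_0\}$ and $\{x_0\}\times Y$ are closed subspaces of $X\times Y$ isometric to $X$ and $Y$, the corollary on closed subspaces forces both $X$ and $Y$ to be uniformly star superparacompact. It then remains to show that either a factor is compact or both are strongly locally compact. I would argue by contradiction: if neither factor is compact and, say, $X$ is not strongly locally compact, pick $x_0$ with $f_c^X(x_0)=0$; since $Y$ is uniformly star superparacompact but noncompact, a non-clustering sequence in $Y$ cannot have $f_c^Y\to 0$ (contrapositive of (9)), so after passing to a subsequence I obtain a non-clustering $\langle y_n\rangle$ with $f_c^Y(y_n)$ bounded away from $0$. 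Then $f_c^{X\times Y}((x_0,y_n)) = \min\{0, f_c^Y(y_n)\} = 0$, so by (9) the sequence $\langle (x_0,y_n)\rangle$ must cluster; but by the coordinate-clustering fact this forces $\langle y_n\rangle$ to cluster, a contradiction.

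For the reverse implication I would again verify condition (9). In case (1), with $Y$ compact, the formula gives $f_c^{X\times Y}((x,y)) = f_c^X(x)$, so a sequence with $\lim f_c^{X\times Y}=0$ has $\lim f_c^X(x_n)=0$; uniform star superparacompactness of $X$ makes $\langle x_n\rangle$ cluster, and sequential compactness of $Y$ lets me extract a further subsequence convergent in the second coordinate, producing a cluster point of $\langle (x_n,y_n)\rangle$. In case (2), I would invoke the Remark following Theorem \ref{equivalentconditionsforspace}: a strongly locally compact uniformly star superparacompact space is strongly uniformly locally compact, so $\inf f_c^X>0$ and $\inf f_c^Y>0$; by the product formula $\inf f_c^{X\times Y} \ge \min\{\inf f_c^X, \inf f_c^Y\} > 0$, whence no sequence satisfies $\lim f_c^{X\times Y}=0$ and condition (9) holds vacuously.

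The step I expect to be the main obstacle is the structural lemma, specifically the $\supseteq$ inclusion where chains of different lengths in the two coordinates must be synchronized; the padding argument is exactly what makes the product formula for $f_c$ an equality rather than a mere inequality, and the entire proof rests on that exactness. The remaining steps are then routine applications of conditions (8)--(9) together with the strongly-uniformly-locally-compact Remark.
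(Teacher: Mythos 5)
Your proposal is correct and takes essentially the same route as the paper: sufficiency is verified via conditions (8)--(9) of Theorem \ref{equivalentconditionsforspace} in the same two cases, and necessity uses the closed-subspace corollary together with the same contradiction built from a non-clustering sequence of the form $\langle (x_0,y_n)\rangle$ with $x_0 \in \operatorname{nslc}(X)$. The only real difference is that you make explicit the identity $S^\infty_\rho((x,y),\varepsilon)=S^\infty_{d_1}(x,\varepsilon)\times S^\infty_{d_2}(y,\varepsilon)$ and the resulting formula $f_c^{X\times Y}((x,y))=\min\{f_c^X(x),f_c^Y(y)\}$, which the paper uses only implicitly through its one-sided consequences.
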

\begin{proof}
We write the measure of strong local compactness functionals for the factors by $f_c^X$ and $f_c^Y$ and write $f_c$ for the product. For sufficiency, suppose first that one of the spaces, say $\langle X, d_1 \rangle$, is compact while $\langle Y, d_2 \rangle$ is just uniformly star superparacompact. Let $(x, y) \in X \times Y$. If $S^{\infty}_{d_2}(y, \varepsilon)$ is compact, then $S^{\infty}_\rho((x, y), \varepsilon)$ is compact as $S^{\infty}_{d_1}(x, \varepsilon)$ is always compact. Thus, if $\langle (x_n, y_n) \rangle$ is a sequence in $X \times Y$ with $\lim_{n \to \infty} f_c((x_n, y_n)) = 0,$ then $\lim_{n \to \infty} f_c^Y(y_n) = 0$. As a result, $\langle y_n \rangle$ has a convergent subsequence. Also, $\langle x_n \rangle$ also has a convergent subsequence. Thus, by passing to subsequence, we conclude that $\langle (x_n, y_n) \rangle$ clusters.

Now suppose that $X$ and $Y$ are both strongly locally compact and uniformly star superparacompact. Since $\operatorname{nslc}(X) = \emptyset = \operatorname{nslc}(Y)$, we conclude by condition (8) of Theorem \ref{equivalentconditionsforspace} that $\inf \{ f_c^X(x) : x \in X \} > 0$ and $\inf \{ f_c^Y(y) : y \in Y \} > 0$. As a result, $\inf \{ f_c((x, y)) : (x, y) \in X \times Y \}>0$, so the product is uniformly star superparacompact by the same condition.

For necessity, assume $X \times Y$ is uniformly star superparacompact. From the immediately preceding corollary, $X$ and $Y$ both are uniformly star superparacompact. Suppose neither factor is compact and at least one, say $\langle X, d_1 \rangle$, is not locally compact, that is, $\operatorname{nslc}(X) \neq \emptyset$. Pick $x_0 \in\operatorname{nslc}(X)$ and a sequence $\langle y_n \rangle$ in $Y$ that does not cluster. Since $X \times Y$ is equipped with the box-metric, for each $n \in \mathbb{N}$ we have $(x_0, y_n) \in \operatorname{nslc}(X \times Y)$, but the sequence $\langle (x_0, y_n) \rangle$ fails to cluster in $X \times Y$. By condition (9) of Theorem \ref{equivalentconditionsforspace}, the product is not uniformly star superparacompact, which is a contradiction.
\end{proof}

We now specify what precisely must be added to uniform paracompactness to achieve uniform star superparacompactness for a metric space.

\begin{theorem}
Let $\langle X, d \rangle$ be a uniformly paracompact metric space. Then the space is a unifromly star superparacompact if and only if each sequence $\langle x_n \rangle$ in $X$ with $\lim_{n \to \infty} f_c(x_n) = 0$ has a Cauchy subsequence.
\end{theorem}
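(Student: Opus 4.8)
The plan is to reduce everything to condition (9) of Theorem \ref{equivalentconditionsforspace}, which asserts that $X$ is uniformly star superparacompact if and only if every sequence $\langle x_n \rangle$ in $X$ with $\lim_{n\to\infty} f_c(x_n)=0$ clusters. Since $X$ is a metric space, clustering of a sequence is equivalent to the existence of a convergent subsequence, and every convergent sequence is Cauchy. This immediately settles the \emph{necessity} direction, without even invoking uniform paracompactness: if $X$ is uniformly star superparacompact and $\lim_{n\to\infty} f_c(x_n)=0$, then $\langle x_n \rangle$ clusters by condition (9), hence admits a convergent—and therefore Cauchy—subsequence.

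For the \emph{sufficiency} direction, the key observation is that uniform paracompactness supplies completeness. Indeed, uniform paracompactness coincides with cofinal completeness in the metric setting, and cofinal completeness implies completeness (as recorded in the hierarchy in the Introduction). Thus $\langle X, d\rangle$ is a complete metric space.

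Now I would take any sequence $\langle x_n \rangle$ in $X$ with $\lim_{n\to\infty} f_c(x_n)=0$. By hypothesis it has a Cauchy subsequence $\langle x_{n_k}\rangle$, and by completeness this subsequence converges to some $p\in X$. Then $p$ is a cluster point of the original sequence, so $\langle x_n \rangle$ clusters. Since this holds for every such sequence, condition (9) of Theorem \ref{equivalentconditionsforspace} is met, and therefore $X$ is uniformly star superparacompact.

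The argument carries essentially no technical obstacle; the only point requiring care is the justification that uniform paracompactness yields completeness, so that the promoted Cauchy subsequence is guaranteed to converge rather than merely to remain Cauchy. Everything else is the standard metric-space identification of clustering with convergent subsequences, together with a direct appeal to the already-established equivalence (1)$\Leftrightarrow$(9) of Theorem \ref{equivalentconditionsforspace}.
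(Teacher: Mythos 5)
Your proof is correct, but your converse direction takes a genuinely different route from the paper's. The paper verifies condition (2) of Theorem \ref{equivalentconditionsforspace}: it starts from an arbitrary cofinally Bourbaki quasi-Cauchy sequence $\langle x_n\rangle$, chooses infinite index sets $M_n$ with $x_j \in S_d^{\infty}(p_n, \frac{1}{n})$ for $j \in M_n$, and, assuming none of these subsequences clusters, extracts a subsequence along which $f_c$ tends to zero; the hypothesis then yields a Cauchy subsequence, which clusters because a Cauchy sequence is in particular cofinally Cauchy and $X$ is uniformly paracompact (i.e., cofinally complete). You instead verify condition (9) directly: uniform paracompactness implies completeness, so the Cauchy subsequence guaranteed by the hypothesis actually converges, and the original sequence clusters. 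Both arguments hinge on the same final step --- promoting the Cauchy subsequence to a clustering one via a completeness-type property --- but yours bypasses the cofinally Bourbaki quasi-Cauchy machinery and the diagonal-style extraction entirely. What your route buys is economy and, implicitly, a stronger statement: in the sufficiency direction you use nothing beyond completeness of $X$, and in the necessity direction you use nothing beyond condition (9), so your argument shows the equivalence holds for every complete metric space, not only for uniformly paracompact ones. What the paper's route buys is a self-contained verification of the sequential definition of cofinal Bourbaki quasi-completeness (condition (2)), making explicit how the Cauchy-subsequence hypothesis interacts with cofinally Bourbaki quasi-Cauchy sequences; as a proof of the stated theorem, however, it is strictly more work.
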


\begin{proof}
Suppose $\langle X, d \rangle$ is a  unifromly star superparacompact space, and let $\langle x_n \rangle$ satisfy $\lim_{n \to \infty} f_c(x_n) = 0$. By condition (9) of Theorem \ref{equivalentconditionsforspace}, $\langle x_n \rangle$ clusters, which means that it has a convergent and therefore a Cauchy subsequence.

For the converse, it is suffient to prove condition (2) of Theorem \ref{equivalentconditionsforspace}. Let $\langle x_n \rangle$ be a cofinally Boubaki quasi-Cauchy subsequence in $X$. Then for each $n \in \mathbb{N}$, let $M_n$ be an infinite subset of $\mathbb{N}$ such that $\forall j\in M_n$, $x_j\in S_d^{\infty}(p_n, \frac 1 n)$ for some $p_n\in X$.
If for some $n \in \mathbb{N}$, the subsequence $\langle x_j \rangle_{j \in M_n}$ clusters, we are done. Otherwise, $\langle x_n \rangle$ has a subsequence along which $f_c$ tends to zero which by assumption has a Cauchy subsequence. Since this subsequence is cofinally Cauchy and $X$ is a uniformly paracompact space, the subsequence clusters. Thus $\langle x_n \rangle$ clusters.
\end{proof}

\begin{theorem}\label{up_uss}
Let $\langle X, d \rangle$ be a uniformly paracompact metric space. Then the space is uniform star superparacompact if and only if $\forall \varepsilon>0$, $\exists\delta>0$ such that for every $x\in X$, we can find $x_1, x_2,\ldots x_k\in X$ satisfying $S_d^{\infty}(x, \delta)\subseteq \bigcup_{i=1}^k S_d(x_i, \varepsilon)$.
\end{theorem}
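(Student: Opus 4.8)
The plan is to route both implications through condition (4) of Theorem \ref{equivalentconditionsforspace}, which characterizes uniform star superparacompactness by the property that for every open cover $\mathcal V$ of $X$ there is some $\mu > 0$ with each chainable component $S_d^{\infty}(x, \mu)$ admitting a finite subcover from $\mathcal V$. The forward implication needs nothing more than this. Given $\varepsilon > 0$, I would apply condition (4) to the particular open cover $\mathcal V = \{S_d(z, \varepsilon) : z \in X\}$, producing $\mu > 0$ such that for every $x \in X$ the component $S_d^{\infty}(x, \mu)$ lies in a finite union of members of $\mathcal V$, i.e. in finitely many $\varepsilon$-balls. Taking $\delta = \mu$ reproduces the stated covering condition verbatim. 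It is worth remarking that this direction does not invoke uniform paracompactness at all, consistent with uniform star superparacompactness being the stronger hypothesis.

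For the converse I would assume $X$ is uniformly paracompact and satisfies the covering condition, and verify condition (4) directly. Fix an open cover $\mathcal V$ of $X$. The key external input is the ball-version characterization of uniform paracompactness, namely the space form of condition (2) of \cite[Theorem 29.1]{BeerBook} (equivalently, the case in which the distinguished subset is all of $X$): there exists $\lambda > 0$ such that every ball $S_d(x, \lambda)$ is contained in a finite union of members of $\mathcal V$. Feeding $\varepsilon = \lambda$ into the covering hypothesis yields $\delta > 0$ with the property that, for each $x \in X$, one has $S_d^{\infty}(x, \delta) \subseteq \bigcup_{i=1}^{k} S_d(z_i, \lambda)$ for suitable $z_1, \dots, z_k \in X$. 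Since each $S_d(z_i, \lambda)$ is itself covered by finitely many members of $\mathcal V$, the whole component $S_d^{\infty}(x, \delta)$ is a finite union of members of $\mathcal V$; setting $\mu = \delta$ then establishes condition (4), and hence uniform star superparacompactness.

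The only genuinely delicate point is the correct invocation of uniform paracompactness in the converse: one must use the ``each small ball sits inside a finite union of cover elements'' form of cofinal completeness rather than the locally-finite-refinement definition, and then the precise role of the covering hypothesis is to upgrade this control from single balls to entire $\delta$-chainable components, after which a finite union of finitely many finite unions is again finite. As an alternative to the condition-(4) argument, the converse can also be carried out sequentially: the covering hypothesis refines any cofinally Bourbaki quasi-Cauchy sequence into an ordinary cofinally Cauchy one (for $\varepsilon > 0$, choose the associated $\delta$, capture infinitely many terms inside a single $S_d^{\infty}(p, \delta)$ via the quasi-Cauchy condition, then cover that component by $\varepsilon$-balls and use pigeonhole to trap infinitely many of those terms in one $\varepsilon$-ball), whereupon cofinal completeness of the uniformly paracompact space forces clustering and yields condition (2) of Theorem \ref{equivalentconditionsforspace}. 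I would nonetheless present the condition-(4) proof as the main line, since it stays entirely within the machinery already assembled in the paper.
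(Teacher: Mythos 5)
Your proposal is correct and follows essentially the same route as the paper: the forward direction applies condition (4) of Theorem \ref{equivalentconditionsforspace} to the cover $\{S_d(z,\varepsilon):z\in X\}$, and the converse combines the ball-level finite-subcover characterization of uniform paracompactness (from Beer's book) with the covering hypothesis to verify condition (4) for an arbitrary open cover. In fact your write-up is slightly more careful than the paper's, which conflates the $\mu$ from uniform paracompactness with the $\delta$ supplied by the hypothesis in its final step.
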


\begin{proof}
Suppose $\langle X, d \rangle$ is a uniform star superparacompact space. Let $\varepsilon>0$ be given. Consider the open cover $\{S_d(x, \varepsilon):x\in X\}$ of $X$. By condition (4) of Theorem \ref{equivalentconditionsforspace}, there exists \(\delta > 0\) such that for each \(x \in X\), \(S^{\infty}_d(x, \delta)\) has a finite subcover from the collection  $\{S_d(x, \varepsilon):x\in X\}$, and this proves the condition. 

%Suppose to the contrary that there exists $\varepsilon_0 > 0$ satisfying that for every $\delta > 0$, there exist some $x_{\delta} \in X$ such that $S_d^{\infty}(x_{\delta}, \delta)$ cannot be covered by finitely many balls $S_d(x, \varepsilon_0)$. 

%Take $\delta = \frac{1}{n}$, $n \in \mathbb{N}$. Then for every $n \in \mathbb{N}$, we can choose an infinite set $E_n \subset S_d^{\infty}(x_{\frac{1}{n}}, \frac{1}{n})$ which is $\varepsilon_0$-discrete.

%By \cite[Lemma 1.3.26]{Thesis}, there exists an infinite set $E'_n \subset E_n$ such that $\bigcup_{n \in \mathbb{N}} E'_n$ is $\varepsilon_0/2$-discrete.

%Now, consider a partition of $\mathbb{N}$ into a countable family of infinite subsets $\{M_n : n \in \mathbb{N}\}$. Finally, if we enumerate every set $E'_n = \{x^n_j : j \in M_n\}$, and define the sequence $y_j := x^n_j$ if $j \in M_n$. Clearly, $\langle y_j\rangle_{j \in \mathbb{N}}$ does not cluster. Let $\varepsilon>0$, then there exists $n_0\in\mathbb N$ such that $\frac{1}{n_0}<\varepsilon$. Then $\forall j\in M_{n_0}$, $y_j\in S_d^{\infty}(x_{\frac{1}{n}},\varepsilon)$. Hence $\langle y_j\rangle_{j \in \mathbb{N}}$ is a cofinally Bourbaki quasi-Cauchy sequence that does not cluster, which is a contradiction.

For the converse, let \(\mathcal V\) be an open cover of \(X\). Since $X$ is uniformly paracompact, by condition (4) of \cite[Theorem 30.1]{BeerBook} there exists \(\mu > 0\) such that for each \(x \in X\), \(S_d(x, \mu)\) has a finite subcover from \(\mathcal V\). Hence by the given assumption, \(S^{\infty}_d(x, \mu)\) has a finite subcover from \(\mathcal V\). Hence $X$ is uniformly star superparacompact in view of condition (4) of Theorem \ref{equivalentconditionsforspace}.

\end{proof}

\begin{proposition}\label{cbc_uss}
    Let $\langle X, d \rangle$ be a cofinally Bourbaki-complete metric space. Then the space is uniform star superparacompact if and only if $\forall \varepsilon>0$, $\exists\delta>0$ such that for every $x\in X$, we can find $x_1, x_2,\ldots x_k\in X$ and $m\in\mathbb N$ satisfying $S_d^{\infty}(x, \delta)\subseteq \bigcup_{i=1}^k S^{m}_d(x_i, \varepsilon)$. 
\end{proposition}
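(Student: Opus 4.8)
The plan is to prove the two implications separately, modeling the argument on the preceding Theorem \ref{up_uss} but with uniform paracompactness replaced by cofinally Bourbaki-completeness and the single balls $S_d(x_i,\varepsilon)$ replaced by the chain-balls $S^m_d(x_i,\varepsilon)$. For the forward direction I would argue exactly as there: fix $\varepsilon>0$ and apply condition (4) of Theorem \ref{equivalentconditionsforspace} to the open cover $\{S_d(x,\varepsilon):x\in X\}$ to obtain $\delta>0$ such that each $S^{\infty}_d(x,\delta)$ has a finite subcover $\{S_d(x_1,\varepsilon),\ldots,S_d(x_k,\varepsilon)\}$. Since $S_d(x_i,\varepsilon)=S^1_d(x_i,\varepsilon)$, the required containment holds with $m=1$. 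I note that this half uses only uniform star superparacompactness, not cofinally Bourbaki-completeness.

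The converse is the substantive direction, and here I would verify condition (2) of Theorem \ref{equivalentconditionsforspace}: every cofinally Bourbaki quasi-Cauchy sequence clusters. The guiding idea is that the covering hypothesis \emph{upgrades} a cofinally Bourbaki quasi-Cauchy sequence to a cofinally Bourbaki-Cauchy one. Let $\langle x_n\rangle$ be cofinally Bourbaki quasi-Cauchy and fix $\varepsilon>0$. The hypothesis produces a $\delta>0$, and cofinally Bourbaki quasi-Cauchyness of $\langle x_n\rangle$ yields an infinite $N_\delta\subseteq\mathbb N$ and a point $p\in X$ with $\{x_n:n\in N_\delta\}\subseteq S^{\infty}_d(p,\delta)$. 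Applying the hypothesis to $x=p$ gives points $p_1,\ldots,p_k$ and an $m\in\mathbb N$ with $S^{\infty}_d(p,\delta)\subseteq\bigcup_{i=1}^{k}S^m_d(p_i,\varepsilon)$, so every term indexed by $N_\delta$ lies in this finite union. A pigeonhole argument over the finitely many indices then produces an infinite $N'_\varepsilon\subseteq N_\delta$ and a single $p_j$ with $\{x_n:n\in N'_\varepsilon\}\subseteq S^m_d(p_j,\varepsilon)$. As $\varepsilon>0$ was arbitrary, $\langle x_n\rangle$ is cofinally Bourbaki-Cauchy, hence clusters by the cofinally Bourbaki-completeness of $X$; this gives condition (2) and completes the proof.

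The main obstacle is keeping the quantifiers on $m$ and $k$ straight in the converse: in the hypothesis both the chain length $m$ and the number of chain-balls $k$ depend on the point and on $\varepsilon$, whereas the definition of a cofinally Bourbaki-Cauchy sequence only asks that, for each $\varepsilon$, \emph{some} $m$, some center, and some infinite index set work. Thus the per-$\varepsilon$ dependence is harmless, and the pigeonhole step is precisely what converts the statement ``finitely many chain-balls cover the $\delta$-chainable component'' into ``one chain-ball captures infinitely many terms.'' In writing this up I would be careful to record the exact form being used, namely that $\langle x_n\rangle$ is cofinally Bourbaki-Cauchy when for every $\varepsilon>0$ there exist $m\in\mathbb N$, $p\in X$, and an infinite $N_\varepsilon\subseteq\mathbb N$ with $x_n\in S^m_d(p,\varepsilon)$ for all $n\in N_\varepsilon$, and that cofinally Bourbaki-completeness is exactly the clustering of every such sequence, since these two facts are the hinge of the argument.
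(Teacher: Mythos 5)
Your proof is correct, and while the necessity half coincides with the paper's (both reduce to taking $m=1$ via condition (4) of Theorem \ref{equivalentconditionsforspace}), your converse follows a genuinely different route. The paper disposes of the converse in one line by combining Theorem \ref{up_uss} with an external result, Theorem 1.3.27 of Meroño's thesis \cite{Thesis}; that citation is what bridges coverings by chain-balls $S^m_d(x_i,\varepsilon)$ to coverings by ordinary balls $S_d(x_i,\varepsilon)$ in a cofinally Bourbaki-complete space, so that the hypothesis of Theorem \ref{up_uss} can be met. You instead verify condition (2) of Theorem \ref{equivalentconditionsforspace} directly: applying the covering hypothesis at the center $p$ of the $\delta$-chainable component furnished by cofinal Bourbaki quasi-Cauchyness, and then pigeonholing over the finitely many chain-balls, upgrades every cofinally Bourbaki quasi-Cauchy sequence to a cofinally Bourbaki-Cauchy one, which clusters by assumption. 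Your argument buys self-containedness — it needs neither the thesis citation nor the fact that cofinally Bourbaki-complete spaces are uniformly paracompact, and it isolates exactly where cofinal Bourbaki-completeness enters — whereas the paper's route buys brevity by reusing Theorem \ref{up_uss} and known structure theory. One point in your favor worth keeping in any write-up: the argument hinges on the definition of cofinally Bourbaki-Cauchy sequences, which the paper uses but never formally states; you correctly record the standard definition from \cite{Garrido2014}, and making it explicit is what legitimizes the per-$\varepsilon$ dependence of $m$ and the pigeonhole step.
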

   
\begin{proof}
 For the necessity, take $m=1$. The converse is an immediate consequence of the above theorem and \cite[Theorem 1.3.27.]{Thesis}  
\end{proof}

Before we move further into our analysis, we define the functional $\overline{f_c}:\mathcal{P}_0(X) \to [0, \infty]$ by \(
\overline{f_c}(A) = \operatorname{sup}\{f_c(a):a\in A\} .
\) 

The following functionals are recalled from \cite{Garrido2014, Thesis}; additional information and a more comprehensive discussion may be found therein.

Let \( P_0(X) \) be the family of non-empty subsets of \( X \). 

The functional \( \gamma : P_0(X) \to [0, \infty] \) is defined by
\[
\gamma(A) = \inf \left\{ \varepsilon > 0 : A \subseteq S^m_d(x, \varepsilon) \text{ for some } m \in \mathbb{N} \text{ and } x \in X \right\}.
\]
The functional \( \eta : P_0(X) \to [0, \infty] \) be defined by
\[
\eta(A) = \inf \left\{ \varepsilon > 0 : A \subseteq \bigcup_{i=1}^{k} S^m_d(x_i, \varepsilon) \text{ for some } m \in \mathbb{N}, \text{ and finite } x_i \in X, i = 1, \dots, k \right\}.
\]

The functional \( \alpha : P_0(X) \to [0, \infty] \) be defined by
\[
\alpha(A) = \inf \left\{ \varepsilon > 0 : A \subseteq \bigcup_{i=1}^{k} S_d(x_i, \varepsilon) \text{ for some } m \in \mathbb{N}, \text{ and finite } x_i \in X, i = 1, \dots, k \right\}.
\]
\begin{theorem}
Let \(\langle X, d \rangle\) be a complete metric space. Then the space is unfirom star superparacompact if and only if or every $\varepsilon > 0$, there exists $\delta > 0$ such that for all non-empty closed set $A$, if $\overline{f_c}(A)<\delta$, then $\alpha(A) < \varepsilon$.
\end{theorem}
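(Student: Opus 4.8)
The plan is to prove the two implications separately, leaning on the list of equivalent conditions in Theorem \ref{equivalentconditionsforspace}. For the forward implication I would use condition (8) (compactness of $\operatorname{nslc}(X)$ together with its quantitative escape estimate), and completeness is not needed there. For the reverse implication I would verify the Kuratowski/Cantor-type condition (12), and this is exactly where completeness enters, through the fact that a totally bounded subset of a complete space is relatively compact. Throughout I may assume $f_c$ is finite-valued and hence $1$-Lipschitz; in the remaining case $f_c \equiv \infty$ we have $\overline{f_c}(A) = \infty$ for every $A$, so the right-hand condition is vacuously true while $X$ is uniformly star superparacompact, and both sides hold.

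\textbf{Forward direction.} Assume $X$ is uniformly star superparacompact and fix $\varepsilon > 0$. By condition (8) of Theorem \ref{equivalentconditionsforspace}, $\operatorname{nslc}(X)$ is compact and, applying the escape estimate with the radius $\varepsilon/3$, there is $\mu > 0$ such that $f_c(x) \le \mu$ forces $x \in S_d(\operatorname{nslc}(X), \varepsilon/3)$. I would then set $\delta = \mu$. If $A$ is non-empty, closed, with $\overline{f_c}(A) < \delta$, then every $a \in A$ satisfies $f_c(a) < \mu$, so $A \subseteq S_d(\operatorname{nslc}(X), \varepsilon/3)$. Covering the compact set $\operatorname{nslc}(X)$ by finitely many balls $S_d(z_1, \varepsilon/3), \dots, S_d(z_m, \varepsilon/3)$ and invoking the triangle inequality gives $A \subseteq \bigcup_{i=1}^{m} S_d(z_i, 2\varepsilon/3)$, whence $\alpha(A) \le 2\varepsilon/3 < \varepsilon$. (When $\operatorname{nslc}(X) = \emptyset$, condition (8) forces $\inf_X f_c > 0$, so no admissible $A$ exists and the statement is vacuous.)

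\textbf{Reverse direction.} Assume the $(\varepsilon,\delta)$-condition and verify condition (12) of Theorem \ref{equivalentconditionsforspace}. Let $\langle A_n \rangle$ be a decreasing sequence of non-empty closed sets with $\overline{f_c}(A_n) \to 0$. Given $\varepsilon > 0$, the hypothesis supplies $\delta_\varepsilon > 0$; since $\overline{f_c}(A_n) < \delta_\varepsilon$ for all large $n$, we obtain $\alpha(A_n) < \varepsilon$ for all large $n$, so $\alpha(A_n) \to 0$. I would then pick $a_n \in A_n$. Because a finite set has $\alpha = 0$ and the tail $\{a_k : k \ge m\}$ lies in $A_m$, monotonicity yields $\alpha(\{a_n : n \in \mathbb{N}\}) \le \alpha(A_m)$ for every $m$, so this set is totally bounded. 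Completeness makes its closure compact, hence $\langle a_n \rangle$ has a subsequence converging to some $x^\ast$; as $x^\ast$ lies in each closed $A_m$, we conclude $\bigcap_n A_n \neq \emptyset$. This is precisely condition (12), so $X$ is uniformly star superparacompact.

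\textbf{Main obstacle.} The delicate point will be the reverse direction's Kuratowski step: first transferring $\overline{f_c}(A_n) \to 0$ into $\alpha(A_n) \to 0$ via the hypothesis, and then using completeness to upgrade total boundedness of the chosen sequence into relative compactness; the forward direction is essentially a quantitative rereading of condition (8). I would also take care that $\alpha$ is the ball-measure of noncompactness, so that $\alpha(A) = 0$ is equivalent to $A$ being totally bounded, and that the $1$-Lipschitz property of $f_c$ makes $\overline{f_c}$ insensitive to passing to closures, which is implicitly used in handling the closed sets $A_n$.
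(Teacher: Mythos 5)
Your proof is correct, but it takes a genuinely different route from the paper's. The paper supplies no argument of its own for this theorem: it is dispatched with the single line that it ``follows from \cite[Theorem 28.24]{BeerBook}''; and where the paper does write out proofs of the sibling statements (Theorem \ref{BC-complete_to_USS} and Theorem \ref{BCQ-complete_to_USS}), its method is different, namely necessity by contradiction via \cite[Lemma 28.23]{BeerBook} (Hausdorff convergence of the level sets $F_n=\{x\in X: f_c(x)\le \tfrac{1}{n}\}$ to a compact set) combined with continuity of the relevant set functional with respect to Hausdorff distance, and sufficiency by feeding a decreasing sequence of closed sets into a Cantor-type theorem. You instead derive both implications from the paper's own Theorem \ref{equivalentconditionsforspace}: the forward direction is a quantitative rereading of condition (8) (compactness of $\operatorname{nslc}(X)$ plus the escape estimate, then a finite $\varepsilon/3$-cover of $\operatorname{nslc}(X)$ and the triangle inequality give $\alpha(A)\le 2\varepsilon/3$), with the degenerate cases $\operatorname{nslc}(X)=\emptyset$ and $f_c\equiv\infty$ correctly treated as vacuous; the reverse direction verifies the Kuratowski-type condition (12) by converting $\overline{f_c}(A_n)\to 0$ into $\alpha(A_n)\to 0$, observing that chosen points $a_n\in A_n$ form a set with $\alpha=0$ (monotonicity plus the finite-union property of $\alpha$), and using completeness to upgrade total boundedness to clustering, with closedness of each $A_m$ trapping the limit. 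Every ingredient you invoke is either in the paper or standard, and completeness enters exactly where it must: the space $(0,1)$ satisfies your $(\varepsilon,\delta)$-hypothesis (there $f_c\equiv 0$ and $\alpha\equiv 0$) yet is not uniformly star superparacompact, so the step from total boundedness to compactness cannot be dropped. Your route buys a self-contained, elementary proof that isolates the role of completeness; the paper's route buys brevity and a single reusable technique (Hausdorff-distance continuity of set functionals) that also handles the $\eta$ and $\eta^*$ variants. One small remark: the $1$-Lipschitz property of $f_c$ that you mention at the end is never actually needed, since your argument only ever applies the hypothesis to sets that are already closed.
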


The proof of the above theorem follows from \cite[Theorem 28.24]{BeerBook}. We now see what must be added to Boubaki-completeness of a metric space to produce uniform star superparacompactness for a metric space. But first we prove the following lemma, which tells that $\eta$ is continuous with respect to the Hausdorff distance.

\begin{lemma}\label{continuousfunctionallemma}
 Let $\langle X, d \rangle$ be a metric space, and suppose $A, A_1, A_2, A_3, \dots$ is a sequence of non-empty subsets of $X$ such that 
\(
\lim_{n \to \infty} H_d(A_n, A) = 0.
\)
Then
\[
\lim_{n \to \infty} \eta(A_n)= \eta(A).
\]  
\end{lemma}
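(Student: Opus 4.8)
The plan is to establish the two one-sided estimates $\limsup_{n} \eta(A_n) \le \eta(A)$ and $\eta(A) \le \liminf_{n} \eta(A_n)$ separately; together they force $\langle \eta(A_n) \rangle$ to converge to $\eta(A)$ in $[0,\infty]$. This packaging also disposes of the degenerate case $\eta(A)=\infty$ for free, since the lower estimate then forces $\liminf_n \eta(A_n)=\infty$ as well, so one need not single it out.

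The one technical ingredient I would isolate first is a monotone ``enlargement absorbs one chain step'' inclusion: whenever $\max\{\varepsilon_0,\delta\}\le\varepsilon$, one has $\bigl(S^m_d(x,\varepsilon_0)\bigr)^{\delta}\subseteq S^{m+1}_d(x,\varepsilon)$. To see this, take $y$ with $d\bigl(y, S^m_d(x,\varepsilon_0)\bigr)<\delta$, pick $z\in S^m_d(x,\varepsilon_0)$ with $d(y,z)<\delta$, and append $y$ to an $\varepsilon_0$-chain of length $m$ from $x$ to $z$; since each original step is $<\varepsilon_0\le\varepsilon$ and the last step is $<\delta\le\varepsilon$, this produces an $\varepsilon$-chain of length $m+1$ from $x$ to $y$. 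Together with the elementary monotonicity $S^m_d(x,\varepsilon')\subseteq S^m_d(x,\varepsilon'')$ for $\varepsilon'\le\varepsilon''$ (which already shows the admissible set of radii in the definition of $\eta$ is an up-interval, so that $\eta(A)<\varepsilon$ genuinely yields a covering at every radius exceeding $\eta(A)$), this inclusion is the entire engine of the proof.

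For the upper estimate I would fix $\varepsilon>\eta(A)$, choose $\varepsilon_0\in(\eta(A),\varepsilon)$ together with $m\in\mathbb{N}$ and $x_1,\dots,x_k$ realizing $A\subseteq\bigcup_{i=1}^{k}S^m_d(x_i,\varepsilon_0)$, and set $\delta=\varepsilon-\varepsilon_0>0$. For all large $n$ the hypothesis $H_d(A_n,A)\to 0$ gives $A_n\subseteq A^{\delta}$, whence $A_n\subseteq\bigcup_{i=1}^{k}\bigl(S^m_d(x_i,\varepsilon_0)\bigr)^{\delta}\subseteq\bigcup_{i=1}^{k}S^{m+1}_d(x_i,\varepsilon)$ by the lemma, so $\eta(A_n)\le\varepsilon$ eventually and $\limsup_n\eta(A_n)\le\varepsilon$; letting $\varepsilon\downarrow\eta(A)$ finishes this half. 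The lower estimate is the mirror image powered by the symmetry of $H_d$: fixing $\varepsilon>\liminf_n\eta(A_n)$, choose $\varepsilon_0\in(\liminf_n\eta(A_n),\varepsilon)$ and infinitely many $n$ admitting a covering $A_n\subseteq\bigcup_{i=1}^{k}S^m_d(x_i,\varepsilon_0)$; selecting one such $n$ large enough that $A\subseteq A_n^{\delta}$ with $\delta=\varepsilon-\varepsilon_0$ and applying the lemma gives $A\subseteq\bigcup_{i=1}^{k}S^{m+1}_d(x_i,\varepsilon)$, that is $\eta(A)\le\varepsilon$, and $\varepsilon\downarrow\liminf_n\eta(A_n)$ yields $\eta(A)\le\liminf_n\eta(A_n)$.

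I do not anticipate a genuine obstacle; the only delicate point is the bookkeeping of the two radii $\varepsilon_0$ (from the covering) and $\delta$ (from the Hausdorff bound), which must be arranged so that $\max\{\varepsilon_0,\delta\}<\varepsilon$ while the chain length grows only by one. The other point worth stating cleanly is that the two estimates are genuinely symmetric because the Hausdorff condition supplies both $A_n\subseteq A^{\delta}$ and $A\subseteq A_n^{\delta}$, so the lower estimate requires no separate construction beyond reusing the same inclusion lemma.
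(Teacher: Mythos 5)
Your proof is correct and follows essentially the same route as the paper's: both rest on the mutual Hausdorff enlargements $A_n \subseteq A^{\delta}$ and $A \subseteq A_n^{\delta}$, a covering of one set at a radius slightly above its $\eta$-value, and the observation that a $\delta$-enlargement of an $m$-step chainable set is absorbed into an $(m+1)$-step chainable set at the larger radius, applied symmetrically in both directions. The only differences are presentational: you package the two directions as $\limsup$/$\liminf$ estimates and isolate the chain-absorption inclusion as an explicit lemma (which also cleanly covers the degenerate case $\eta(A)=\infty$ that the paper's use of the radius $\eta(A)+\varepsilon/2$ implicitly assumes away), whereas the paper carries out the same computation directly with $\varepsilon/2$.
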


\begin{proof}
 Let $\varepsilon>0$ be given. Then $\exists n_0\in\mathbb N$ such that $ A_n\subseteq A^{\frac{\varepsilon}{2}}$ and $ A\subseteq A_n^{\frac{\varepsilon}{2}}$ for all $n\geq n_0$. Also, there exist $m\in \mathbb N$ and $x_1,\ldots x_k\in X$ such that $A\subseteq \bigcup_{i=1}^k S_d^m(x_i, \eta(A)+\frac{\varepsilon}{2})$. Hence $A_n\subseteq \bigcup_{i=1}^k S_d^{m+1}(x_i, \eta(A)+\frac{\varepsilon}{2})$ for all $n\geq n_0$. Thus $\eta(A_n)\leq\eta(A)+\frac{\varepsilon}{2}$ for all $n\geq n_0$. Similarly, $\eta(A)\leq\eta(A_n)+\frac{\varepsilon}{2}$. As a result, $|\eta(A_n)-\eta(A)|<\varepsilon$ for all $n\geq n_0$. Hence, \(
\lim_{n \to \infty} \eta(A_n) = \eta(A).
\) 
\end{proof}

\begin{theorem}\label{BC-complete_to_USS}
Let $\langle X, d \rangle$ be a Bourbaki-complete metric space. Then the space is unfirom star superparacompact if and only if or every $\varepsilon > 0$, there exists $\delta > 0$ such that for all non-empty closed set $A$, if $\overline{f_c}(A)<\delta$, then $\eta(A) < \varepsilon$.
\end{theorem}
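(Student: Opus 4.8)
The plan is to prove both implications through the sequential characterisation in condition (9) of Theorem \ref{equivalentconditionsforspace}, namely that $X$ is uniformly star superparacompact precisely when every sequence $\langle x_n\rangle$ with $\lim_{n\to\infty} f_c(x_n)=0$ clusters. I may assume $f_c$ is finite-valued: if $f_c\equiv\infty$ then no sequence has $f_c\to 0$ and $\overline{f_c}(A)=\infty$ for every $A$, so both the space property and the metric condition hold vacuously. With $f_c$ finite it is $1$-Lipschitz, hence continuous, a fact I will use to pass $\overline{f_c}$ to closures.

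For the substantive implication (sufficiency), assume $X$ is Bourbaki-complete and satisfies the stated condition, and let $\langle x_n\rangle$ satisfy $\lim_{n\to\infty} f_c(x_n)=0$. Put $A_n=\cl\{x_k:k\ge n\}$, a decreasing sequence of nonempty closed sets. By continuity of $f_c$, $\overline{f_c}(A_n)=\sup_{k\ge n} f_c(x_k)\to 0$, so the hypothesis forces $\eta(A_n)\to 0$. Fixing $\varepsilon=1/j$ and taking the corresponding $\delta$, for all large $n$ we get $\eta(A_n)<1/j$; thus some tail $\{x_k:k\ge N_j\}$ is contained in a finite union $\bigcup_{i=1}^{k_j} S^{m_j}_d(y_i^{(j)},1/j)$ for a single $m_j\in\mathbb N$. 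Now I would run a pigeonhole-and-diagonal extraction: starting from $T_0=\mathbb N$ and passing at stage $j$ to an infinite subset $T_j\subseteq T_{j-1}$ whose terms all lie in one fixed ball $S^{m_j}_d(y_{i_j}^{(j)},1/j)$, then choosing $k_1<k_2<\cdots$ with $k_j\in T_j$. For each scale $1/J$, every term $x_{k_j}$ with $j\ge J$ lies in the single Bourbaki ball $S^{m_J}_d(y_{i_J}^{(J)},1/J)$, so $\langle x_{k_j}\rangle$ is Bourbaki-Cauchy; Bourbaki-completeness then hands it a cluster point, whence $\langle x_n\rangle$ clusters. The main obstacle is precisely this extraction: the cover produced by $\eta$ uses a common chain-length $m_j$ at each scale, and one must preserve that single $m_j$ (not a growing one) while nesting and diagonalising, so that the resulting subsequence genuinely satisfies the Bourbaki-Cauchy condition and Bourbaki-completeness applies. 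Equivalently, one may phrase the conclusion through condition (12) of Theorem \ref{equivalentconditionsforspace}, the cluster point landing in $\bigcap_n A_n$ since each $A_n$ is closed and eventually contains the subsequence.

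For necessity I argue by contraposition, and here neither Bourbaki-completeness nor the finer structure of $\eta$ is needed. If the metric condition fails for some $\varepsilon_0$, then for each $n$ there is a nonempty closed $A_n$ with $\overline{f_c}(A_n)<1/n$ yet $\eta(A_n)\ge\varepsilon_0$. Since $S^1_d(\cdot,\varepsilon)\subseteq S^m_d(\cdot,\varepsilon)$, we have $\eta\le\alpha$, so $\alpha(A_n)\ge\varepsilon_0$ and $A_n$ is not totally bounded; hence $A_n$ contains an infinite $\tfrac{\varepsilon_0}{2}$-separated set. I would then build one sequence greedily: having chosen $z_1,\dots,z_{k-1}$, note that each ball $S_d(z_i,\varepsilon_0/4)$ meets at most one point of the separated set in $A_k$, so infinitely many of those points avoid $\bigcup_{i<k} S_d(z_i,\varepsilon_0/4)$; pick $z_k\in A_k$ among them. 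The resulting $\langle z_k\rangle$ is $\tfrac{\varepsilon_0}{4}$-separated and so has no cluster point, while $f_c(z_k)\le\overline{f_c}(A_k)<1/k\to 0$, contradicting condition (9). Alternatively, necessity is immediate from the complete-space analogue established earlier together with $\eta\le\alpha$, a Bourbaki-complete space being complete.
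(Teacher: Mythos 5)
Your proof is correct, but it takes a genuinely different route from the paper's in both directions. For sufficiency, the paper does not work with sequences at all: it verifies the Cantor-type condition (12) of Theorem \ref{equivalentconditionsforspace} (via \cite[Theorem 28.20]{BeerBook}) by noting that a decreasing sequence of non-empty closed sets with $\overline{f_c}(A_n)\to 0$ has $\eta(A_n)\to 0$ under the hypothesis, and then invoking the Garrido--Mero\~{n}o Cantor theorem for Bourbaki-complete spaces \cite[Theorem 22]{Garrido2014} to get a non-empty compact intersection. You instead verify the sequential condition (9) directly: tails $A_n=\cl\{x_k:k\ge n\}$, pigeonhole inside the finite $\eta$-covers, and a diagonal extraction of a Bourbaki-Cauchy subsequence, to which Bourbaki-completeness applies; the obstacle you flag about preserving a single chain-length per scale is correctly resolved, since the definition of $\eta$ supplies one $m_j$ for the whole finite union at scale $j$, and Bourbaki-Cauchyness only requires $m$ to depend on $\varepsilon$. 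For necessity the divergence is sharper: the paper's argument runs through the sublevel sets $F_n=\{x\in X: f_c(x)\le 1/n\}$, their Hausdorff convergence to the compact kernel (\cite[Lemma 28.23]{BeerBook}), monotonicity of $\eta$, and Lemma \ref{continuousfunctionallemma} on the $H_d$-continuity of $\eta$; your argument avoids all of this machinery by using $\eta\le\alpha$ to conclude that the bad sets $A_n$ are not totally bounded, then building an $\varepsilon_0/4$-separated sequence $z_k\in A_k$ with $f_c(z_k)\to 0$ and no cluster point, contradicting condition (9). Your route is more elementary and self-contained (it needs neither the Hausdorff-distance lemma nor the cited Cantor-type theorems), at the cost of explicit combinatorial extractions; the paper's route is shorter on the page because it leans on cited results, and its necessity argument yields the structural byproduct that the sublevel sets of $f_c$ converge in Hausdorff distance to a compact set. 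Your closing alternative for necessity---deducing it from the complete-space theorem with $\alpha$ together with $\eta\le\alpha$, since a Bourbaki-complete space is complete---is also valid.
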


\begin{proof}
For sufficiency, let $\langle A_n \rangle$ be a decreasing sequence non-empty closed subset with $\lim_{n\to\infty} \overline{f_c}(A_n) = 0$. Let $\varepsilon > 0$ be given. By our assumption, there exists $\delta > 0$ such that $\overline{f_c}(A_n) <\delta$, then $\eta(A_n) < \varepsilon$, for all $n\in\mathbb N$. As a result, $\lim_{n\to\infty} \eta(A_n) = 0$. Since $\langle X, d \rangle$ is Bourbaki-complete, by \cite[Theorem 22]{Garrido2014} $\bigcap_{n=1}^{\infty} C_n \neq \emptyset$ and compact. Hence, by \cite[Theorem 28.20]{BeerBook}, $X$ is uniformly star superparacompact.

Conversely, suppose to the contrary that there exists $\varepsilon > 0$ such that for each $n \in \mathbb{N}$, there exists a non-empty closed subset $A_n$ of $X$ with $\overline{f_c}(A_n) \leq \frac{1}{n}$ but $\eta(A_n) \geq \varepsilon$. Let \(
F_n := \{x \in X : f_c(x) \leq \tfrac{1}{n}\}
\) and put 
\(
F := \bigcap_{n=1}^{\infty} F_n,
\)
which, by \cite[Lemma 28.23]{BeerBook}, is nonempty and compact, and moreover, $\lim_{n\to\infty} H_d(F_n, F) = 0$. Since $A_n \subseteq F_n$, by the monotonicity of $\eta$, we have $\eta(F_n)\geq \varepsilon$. Since $\eta$ is continuous with respect to the Hausdorff distance, we have $\eta(F) \geq \varepsilon$. But $\eta(F) = 0$ since $F$ is qC-precompact (in fact, compact), which leads to a contradiction.
\end{proof}

Next, we give a similar type of theorem for Boubaki quasi-complete metric spaces. First, we introduce the following functionals.

 We define $\gamma^* : \mathcal{P}_0(X) \to [0, \infty]$ by
\[
\gamma^*(A) = \inf \left\{ \varepsilon > 0 : A \subseteq S_d^{\infty}(x, \varepsilon) \text{ for some } x \in X \right\}.
\]
The functional $\gamma^*$ satisfies the following properties:
\begin{itemize}
    \item[(i)] If $A \subseteq B$, then $\gamma^*(A) \leq \gamma^*(B)$.
    \item[(ii)] $\gamma^*(A) = \gamma^*(\cl{A})$. In fact, for any $\varepsilon > 0$ such that $A \subseteq S_d^{\infty}(x, \varepsilon) $ for some $x \in X$, we have $\cl{A} \subseteq S_d^{\infty}(x, \varepsilon)$.
    \item[(iii)] $\gamma^*(A) = 0$ if and only if $A$ is a chainable subset of $X$.
\end{itemize}

Now, define another functional $\eta^* : \mathcal{P}_0(X) \to [0, \infty]$ by
\[
\eta^*(A) = \inf \left\{ \varepsilon > 0 : A \subseteq \bigcup_{i=1}^{k} S_d^{\infty}(x_i, \varepsilon) \text{for some finite } \{x_1, \dots, x_k\} \subseteq X \right\}.
\]
The functional \(\eta^*\) satisfies the following properties:
\begin{itemize}
    \item[(i)] If \(A \subseteq B\), then \(\eta^*(A) \leq \eta^*(B)\).
    \item[(ii)] \(\eta^*(A) = \eta^*(\cl{A})\).
    \item[(iii)] \(\eta^*(A) = 0\) if and only if \(A\) is a qC-precompact subset of \(X\).
    \item[(iv)] \(\eta^*(A \cup B) = \max\{\eta^*(A), \eta^*(B)\}\).
\end{itemize}

Thus, the next theorem characterizes Bourbaki quasi-complete metric spaces in terms of the above functionals.

\begin{theorem}\label{cantortype}
For a metric space \((X, d)\), the following statements are equivalent:
\begin{enumerate}
    \item \(X\) is Bourbaki quasi-complete.
    \item For every decreasing sequence \((A_n)_{n \in \mathbb{N}}\) of non-empty closed subsets of \(X\) which satisfies \(\lim_{n \to \infty} \eta^*(A_n) = 0\), the intersection \(A= \bigcap_{n \in \mathbb{N}} A_n\) is a non-empty compact set.
    \item For every decreasing sequence \((A_n)_{n \in \mathbb{N}}\) of non-empty closed subsets of \(X\) which satisfies \(\lim_{n \to \infty} \gamma^*(A_n) = 0\), the intersection \(A= \bigcap_{n \in \mathbb{N}} A_n\) is a non-empty compact set.
\end{enumerate}
\end{theorem}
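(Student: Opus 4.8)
The plan is to establish the cycle $(1) \Rightarrow (2) \Rightarrow (3) \Rightarrow (1)$, exploiting the fact that $\eta^*$ and $\gamma^*$ are monotone, closure-invariant functionals whose vanishing detects qC-precompactness and chainability, respectively. The unifying theme is that a decreasing sequence of closed sets with $\eta^*$ (or $\gamma^*$) tending to zero produces a Bourbaki quasi-Cauchy sequence, and Bourbaki quasi-completeness is exactly what forces such a sequence to cluster.

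First I would prove $(1) \Rightarrow (2)$. Assume $X$ is Bourbaki quasi-complete and let $\langle A_n \rangle$ be decreasing, closed, non-empty with $\lim_n \eta^*(A_n) = 0$. The key step is to manufacture a Bourbaki quasi-Cauchy sequence witnessing the intersection. For each $n$, pick $x_n \in A_n$; I claim $\langle x_n \rangle$ is Bourbaki quasi-Cauchy. Given $\varepsilon > 0$, choose $N$ so that $\eta^*(A_N) < \varepsilon$, meaning $A_N \subseteq \bigcup_{i=1}^k S_d^{\infty}(p_i, \varepsilon)$ for finitely many $p_i$. Since the sets decrease, $x_n \in A_N$ for all $n \geq N$, so each such $x_n$ lands in one of the finitely many $\varepsilon$-chainable components $S_d^{\infty}(p_i,\varepsilon)$; by the pigeonhole principle infinitely many share a single $p_i$, and for the Bourbaki quasi-Cauchy condition one needs a single $p$ working for a tail --- here I would note that any two points in the same $S_d^{\infty}(p_i,\varepsilon)$ lie in a common $\varepsilon$-chainable component, so after passing to the relevant component the tail condition $x_n \in S_d^{\infty}(p,\varepsilon)$ can be arranged by relabeling the center. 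By Bourbaki quasi-completeness, $\langle x_n \rangle$ clusters at some $x$; since each $A_n$ is closed and contains a tail of the sequence, $x \in \bigcap_n A_n$, so $A$ is non-empty. Compactness of $A$ follows because $A \subseteq A_n$ gives $\eta^*(A) \leq \eta^*(A_n) \to 0$, whence $\eta^*(A) = 0$, so $A$ is a closed qC-precompact set in a Bourbaki quasi-complete (hence complete) space, therefore compact.

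The implication $(2) \Rightarrow (3)$ is the routine comparison: since $S_d^{\infty}(x,\varepsilon)$ is a single $\varepsilon$-chainable component, $A \subseteq S_d^{\infty}(x,\varepsilon)$ forces $\gamma^*(A) = \inf\{\varepsilon : \ldots\} \geq \eta^*(A)$, i.e. $\eta^*(A) \leq \gamma^*(A)$ for every $A$ (a cover by one component is a special case of a cover by finitely many). Hence $\lim_n \gamma^*(A_n) = 0$ implies $\lim_n \eta^*(A_n) = 0$, and the conclusion transfers directly from $(2)$.

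For $(3) \Rightarrow (1)$ I would argue contrapositively from the sequential definition: let $\langle x_n \rangle$ be Bourbaki quasi-Cauchy and set $A_n = \cl\{x_k : k \geq n\}$, a decreasing sequence of non-empty closed sets whose intersection is precisely the cluster set of $\langle x_n \rangle$. The main obstacle --- and the step I expect to require the most care --- is verifying $\lim_n \gamma^*(A_n) = 0$ from the Bourbaki quasi-Cauchy hypothesis. Given $\varepsilon > 0$, the definition supplies $n_0$ and $p$ with $x_k \in S_d^{\infty}(p,\varepsilon)$ for all $k \geq n_0$; thus the tail $\{x_k : k \geq n_0\}$ lies in a single $\varepsilon$-chainable component, and using property (ii) of $\gamma^*$ (closure-invariance, since any point within $\varepsilon$ of the component lies in the component for a slightly larger radius) I would conclude $\gamma^*(A_{n_0}) \leq \varepsilon$, hence $\gamma^*(A_n) \to 0$. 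Condition $(3)$ then yields that $A = \bigcap_n A_n$ is non-empty, which is exactly the statement that $\langle x_n \rangle$ clusters, establishing Bourbaki quasi-completeness. The delicate point throughout is matching the ``finitely many components versus one component'' distinction between $\eta^*$ and $\gamma^*$ to the ``single center $p$'' form of the Bourbaki quasi-Cauchy and Bourbaki quasi-complete definitions, and I would be careful that the radius slack introduced by closures does not break the chainable-component membership.
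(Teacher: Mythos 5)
Your overall architecture (the cycle $(1)\Rightarrow(2)\Rightarrow(3)\Rightarrow(1)$, choosing $x_n\in A_n$, the sets $A_n=\cl\{x_k:k\geq n\}$) matches the paper, and your $(2)\Rightarrow(3)$ and $(3)\Rightarrow(1)$ are correct. But there is a genuine gap in $(1)\Rightarrow(2)$: the claim that the full sequence $\langle x_n\rangle$ is Bourbaki quasi-Cauchy is simply false, and your proposed fix does not repair it. The pigeonhole argument only yields an \emph{infinite set} of indices landing in a single $\varepsilon$-chainable component (i.e., the sequence is \emph{cofinally} Bourbaki quasi-Cauchy), whereas Bourbaki quasi-Cauchy requires a \emph{tail} in a single component. ``Relabeling the center'' cannot bridge this, because the finitely many components $S_d^{\infty}(p_i,\varepsilon)$ may be pairwise disjoint and the sequence may visit two of them alternately. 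Concretely: take $X=[0,1]\cup[2,3]$, $A_n=\{0,2\}$ for all $n$, and $x_n$ alternating between $0$ and $2$. Then $\eta^*(A_n)=0$ for every $n$, but for any $\varepsilon\leq 1$ the components of $0$ and $2$ are the disjoint sets $[0,1]$ and $[2,3]$, so no tail of $\langle x_n\rangle$ lies in one component and the sequence is not Bourbaki quasi-Cauchy. The correct repair is the one the paper uses: the set $\{x_n:n\in\mathbb{N}\}$ is qC-precompact (tail inside $A_N$ plus finitely many initial points), and by the cited result (Theorem 2.2 of \cite{Adhikary2024, Das2021} — proved by nested pigeonhole over $\varepsilon=1,\tfrac12,\tfrac13,\dots$ and diagonalization) every sequence in a qC-precompact set has a Bourbaki quasi-Cauchy \emph{subsequence}; that subsequence clusters by hypothesis, and its cluster point is still a cluster point of $\langle x_n\rangle$, hence lies in every closed $A_n$.

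The same gap resurfaces in your compactness step. Your justification ``closed qC-precompact set in a Bourbaki quasi-complete (hence complete) space, therefore compact'' suggests completeness is what does the work, by analogy with closed-plus-totally-bounded; that analogy fails. In $\mathbb{R}$, which is complete and chainable, \emph{every} subset is qC-precompact (each $S_d^{\infty}(x,\varepsilon)=\mathbb{R}$), yet $\mathbb{Z}$ is closed and non-compact. What compactness of $A$ actually requires is Bourbaki quasi-completeness itself, again through the subsequence lemma: any sequence in the qC-precompact set $A$ has a Bourbaki quasi-Cauchy subsequence, which clusters in $X$ by $(1)$, and the cluster point lies in $A$ since $A$ is closed; thus $A$ is sequentially compact, hence compact. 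With these two repairs your argument coincides with the paper's proof.
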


\begin{proof}
(1) $\Rightarrow$ (2) For every \(n \in \mathbb{N}\), let \(x_n \in A_n\). Clearly, the set \(\{x_n : n \in \mathbb{N}\}\) is a qC-precompact subset of \(X\). By \cite[Theorem 2.2]{Adhikary2024, Das2021}, the sequence \((x_n)_{n \in \mathbb{N}}\) has a Bourbaki quasi–Cauchy subsequence which clusters by Bourbaki quasi-completeness. Then,
\[
A = \bigcap_{n \in \mathbb{N}} A_n \supset \bigcap_{n \in \mathbb{N}} \cl(\{x_m : m \geq n\}) \neq \emptyset.
\]
Since $\eta^*(A)\subseteq \eta^*(A_n)$ for each $n\in\mathbb N$, $\eta^*(A)=0$. Hence $A$ is qC-precompact. Since $A$ is closed and \(X\) is Bourbaki quasi-complete, $A$ is compact. 

(2) $\Rightarrow$ (3) Since \(\eta^*(A) \leq \gamma^*(A)\) for every \(A \subset X\), we have the desired consequence.

(3) $\Rightarrow$ (1) Let \((x_n)_{n \in \mathbb{N}}\) be a Bourbaki quasi–Cauchy sequence in \(X\). We define the decreasing sequence of non-empty closed sets \(
A_n = \cl \{x_m : m \geq n\}.
\)
Now, for each $\varepsilon>0$, there exists $n_0\in\mathbb N$ and $p\in X$ such that for all $m\geq n_0$, $x_m\in S_d^{\infty}(p, \frac{\varepsilon}{2})$. Hence $\gamma^*(A_n)<\varepsilon$ for all $n\geq n_0$. Hence \(\lim_{n \to \infty} \gamma^*(A_n) = 0\). Therefore, the set
\(
A = \bigcap_{n \in \mathbb{N}} A_n
\)
is non-empty and compact. Thus\((x_n)_{n \in \mathbb{N}}\) clusters.
\end{proof}

In a similar way to the proof of Theorem \ref{BC-complete_to_USS}, we can prove the following: 

\begin{theorem}\label{BCQ-complete_to_USS}
Let $\langle X, d \rangle$ be a Bourbaki quasi-complete metric space. Then the space is unfirom star superparacompact if and only if or every $\varepsilon > 0$, there exists $\delta > 0$ such that for all non-empty closed set $A$, if $\overline{f_c}(A)<\delta$, then $\eta^*(A) < \varepsilon$.
\end{theorem}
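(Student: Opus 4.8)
The plan is to follow the template of Theorem~\ref{BC-complete_to_USS} essentially verbatim, making two substitutions: the functional $\eta$ is replaced by $\eta^*$, and the Cantor-type characterisation of Bourbaki-completeness is replaced by the characterisation of Bourbaki quasi-completeness recorded in Theorem~\ref{cantortype}. The one ingredient that must be supplied afresh is the continuity of $\eta^*$ with respect to the Hausdorff distance, the analogue of Lemma~\ref{continuousfunctionallemma}; I would first isolate this as a short lemma. Its proof is in fact slightly cleaner than that of Lemma~\ref{continuousfunctionallemma}: if $A_n \subseteq A^{\varepsilon/2}$ and $A \subseteq A_n^{\varepsilon/2}$, and $A \subseteq \bigcup_{i=1}^{k} S_d^{\infty}(x_i, \eta^*(A)+\tfrac{\varepsilon}{2})$, then appending to a chain the single edge from a point $a \in A$ to a nearby point $y \in A_n$ (which has length $< \eta^*(A)+\tfrac{\varepsilon}{2}$) keeps $y$ inside the same chainable component, so $A_n \subseteq \bigcup_{i=1}^{k} S_d^{\infty}(x_i, \eta^*(A)+\tfrac{\varepsilon}{2})$ and hence $\eta^*(A_n) \le \eta^*(A)+\tfrac{\varepsilon}{2}$; by symmetry $|\eta^*(A_n)-\eta^*(A)| < \varepsilon$. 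Note that, unlike for $\eta$, no increase of the chain-length index is required, since $S_d^{\infty}$ already absorbs an extra edge of the same radius.

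For sufficiency I would take a decreasing sequence $\langle A_n \rangle$ of non-empty closed sets with $\lim_{n} \overline{f_c}(A_n) = 0$ and aim at condition (12) of Theorem~\ref{equivalentconditionsforspace}, that is, at showing $\bigcap_{n} A_n \neq \emptyset$. Given $\varepsilon > 0$, the hypothesis furnishes $\delta > 0$ with $\overline{f_c}(A) < \delta \Rightarrow \eta^*(A) < \varepsilon$; since $\overline{f_c}(A_n) < \delta$ for all large $n$, we get $\eta^*(A_n) < \varepsilon$ eventually, whence $\lim_{n} \eta^*(A_n) = 0$. Bourbaki quasi-completeness together with the implication $(1) \Rightarrow (2)$ of Theorem~\ref{cantortype} then gives that $\bigcap_{n} A_n$ is non-empty (indeed compact), which is precisely condition (12), so $X$ is uniformly star superparacompact.

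For necessity I would argue by contradiction, exactly as in Theorem~\ref{BC-complete_to_USS}. Suppose $X$ is uniformly star superparacompact but the condition fails: there is $\varepsilon > 0$ and, for each $n$, a non-empty closed set $A_n$ with $\overline{f_c}(A_n) \le \tfrac1n$ and $\eta^*(A_n) \ge \varepsilon$. Put $F_n = \{x : f_c(x) \le \tfrac1n\}$ and $F = \bigcap_{n} F_n = \operatorname{nslc}(X)$. By condition (8) of Theorem~\ref{equivalentconditionsforspace}, $\operatorname{nslc}(X)$ is compact, and by \cite[Lemma 28.23]{BeerBook} one has $\lim_{n} H_d(F_n, F) = 0$. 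Since $A_n \subseteq F_n$, monotonicity of $\eta^*$ gives $\eta^*(F_n) \ge \varepsilon$, and the continuity lemma above yields $\eta^*(F) \ge \varepsilon$. But $F$ is compact, hence qC-precompact, so $\eta^*(F) = 0$ by property (iii) of $\eta^*$, a contradiction.

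I expect the only genuine obstacle to be the continuity lemma for $\eta^*$; the remaining steps are transcriptions of the Bourbaki-complete case. One point deserving a line of care is the degenerate situation $\operatorname{nslc}(X) = \emptyset$: then $\inf_{x} f_c(x) > 0$ by condition (8), so for large $n$ no non-empty set can satisfy $\overline{f_c}(A_n) \le \tfrac1n$, the contradiction hypothesis is vacuous, and the stated condition holds trivially. I would dispose of this case first, so that in the main necessity argument $F = \operatorname{nslc}(X)$ may be taken non-empty and \cite[Lemma 28.23]{BeerBook} applies as stated.
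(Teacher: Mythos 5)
Your proposal is correct and follows exactly the route the paper intends: the paper proves Theorem~\ref{BCQ-complete_to_USS} by simply declaring it analogous to Theorem~\ref{BC-complete_to_USS}, with $\eta^*$ replacing $\eta$ and Theorem~\ref{cantortype} replacing the Cantor-type theorem for Bourbaki-complete spaces, which is precisely your argument with the details (the Hausdorff-continuity lemma for $\eta^*$ and the degenerate case $\operatorname{nslc}(X)=\emptyset$) properly filled in.
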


In terms of above functionals, we the following theorem are immediate from our above analysis.

\begin{theorem}\label{uniformly_char}
For a metric space $\langle X, d \rangle$ the following statements are equivalent:

\begin{enumerate}
    \item $X$ is uniformly star superparacompact;
    \item $X$ is uniformly paracompact and $\forall \varepsilon>0$, $\exists\delta>0$ such that for every non-empty subset $A$ of $X$ if $\gamma^*(A)<\delta$, then $\alpha(A)<\varepsilon$; 
    \item $X$ is cofinally Boubaki-complete and $\forall \varepsilon>0$, $\exists\delta>0$ such that for every non-empty subset $A$ of $X$ if $\gamma^*(A)<\delta$, then $\eta(A)<\varepsilon$; 
    \item $X$ is complete and $\forall \varepsilon>0$, $\exists\delta>0$ such that for every non-empty subset $A$ of $X$ if $\operatorname{min}\{\gamma^*(A), \overline{f_c}(A), \overline{\nu}(A)\}<\delta$, then $\alpha(A)<\varepsilon$; 
    \item either $X$ is uniformly locally compact, or $\operatorname{nlc}(X)$ is a non-empty compact set such that, for every $\varepsilon > 0$, the set $X \setminus \operatorname{nlc}(X)$ is uniformly locally compact in its relative topology, and there exists $\delta > 0$ such that
\(
\alpha(S^{\infty}_d(x, \delta)) < \varepsilon,
\)
whenever $x \in \operatorname{nlc}(X)$.     
\end{enumerate}
\end{theorem}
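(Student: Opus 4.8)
The plan is to treat condition (1) as a hub and prove its equivalence with each of (2)--(5) in turn, drawing on the pointwise characterizations already in hand (Theorem~\ref{up_uss}, Proposition~\ref{cbc_uss}, the complete-space characterization whose proof rests on \cite[Theorem 28.24]{BeerBook}, and condition~(8) of Theorem~\ref{equivalentconditionsforspace}). The chain $\text{uniformly star superparacompact} \Rightarrow \text{cofinally Bourbaki-complete} \Rightarrow \text{cofinally complete} = \text{uniformly paracompact} \Rightarrow \text{complete}$ supplies the ``base completeness'' clause of (2), (3), (4) for free whenever we argue (1)$\Rightarrow$(2),(3),(4). The one reusable device I would isolate first is a translation between the set-functional conditions and pointwise conditions on chainable components: for fixed $\varepsilon,\delta>0$, the statement ``$\alpha(A)<\varepsilon$ whenever $\gamma^*(A)<\delta$'' is equivalent, after harmless halving of the constants, to ``$\alpha(S^{\infty}_d(x,\delta))\le\varepsilon$ for all $x$'', because $\gamma^*(A)<\delta$ forces $A\subseteq S^{\infty}_d(x,\delta)$ for some $x$ by definition of $\gamma^*$, while conversely $\gamma^*(S^{\infty}_d(x,\delta/2))\le\delta/2$; monotonicity of $\alpha$ finishes both directions. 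The identical translation with $\eta$ replacing $\alpha$ serves condition (3).

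With that lemma, (1)$\Leftrightarrow$(2) is precisely Theorem~\ref{up_uss} rephrased, together with the fact that uniform star superparacompactness implies uniform paracompactness; and (1)$\Leftrightarrow$(3) is Proposition~\ref{cbc_uss} rephrased, together with uniform star superparacompactness implying cofinal Bourbaki-completeness.

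For (1)$\Leftrightarrow$(4) the decisive elementary remark is that $f_c(x)\le\nu(x)$ for every $x$: for $\varepsilon<f_c(x)$ the chainable component $S^{\infty}_d(x,\varepsilon)$ is compact and clopen, hence contains $\cl(S_d(x,\varepsilon))$ as a compact subset, so $\nu(x)\ge\varepsilon$. Consequently $\overline{f_c}(A)\le\overline{\nu}(A)$ and the three-fold minimum collapses to $\min\{\gamma^*(A),\overline{f_c}(A)\}$. Then (4)$\Rightarrow$(1) is immediate from the complete-space characterization, since $\overline{f_c}(A)<\delta$ already pushes the minimum below $\delta$ and hence yields $\alpha(A)<\varepsilon$; and (1)$\Rightarrow$(4) follows by taking $\delta$ to be the smaller of the two thresholds furnished respectively by the already-proved condition (2) and by the complete-space characterization. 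I would also note that replacing $A$ by $\cl{A}$ leaves each of $\gamma^*$, $\overline{f_c}$, $\overline{\nu}$, $\alpha$ unchanged (continuity of $f_c$ and $\nu$, closure-invariance of $\gamma^*$ and $\alpha$), so the ``closed set'' hypothesis of the complete-space characterization is no restriction.

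The real work, and what I expect to be the main obstacle, is (1)$\Leftrightarrow$(5). The idea is to run condition~(8) of Theorem~\ref{equivalentconditionsforspace} through the dichotomy ``the singular set is empty or not''. If the set of points with no compact chainable component is empty, condition~(8) upgrades to $\inf_x f_c(x)>0$, i.e.\ strong uniform local compactness, which is the first alternative; if it is non-empty, condition~(8) makes it compact and bounds $f_c$ below off each positive enlargement of it, from which I would read off both the uniform local compactness of the complement and, via the functional condition of (2), the estimate $\alpha(S^{\infty}_d(x,\delta))<\varepsilon$ for singular $x$. The converse reassembles these into condition~(9) by showing that any sequence with $f_c(x_n)\to 0$ must accumulate on the compact singular set. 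The delicate points here are twofold: first, reconciling the two singular sets $\operatorname{nlc}(X)$ and $\operatorname{nslc}(X)$ occurring across the hypotheses --- the inclusion $\operatorname{nlc}(X)\subseteq\operatorname{nslc}(X)$ is automatic, but the correct set to use must be pinned down carefully, since for instance $\mathbb{R}$ is uniformly locally compact (with $\operatorname{nlc}=\emptyset$) yet not uniformly star superparacompact, so the relevant local-compactness notion must be the strong one governed by $f_c$; and second, checking that genuine uniform local compactness of the complement (not merely pointwise local compactness) is delivered by the uniform lower bound on $f_c$. These two checks are where I would concentrate the effort.
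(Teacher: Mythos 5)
Your treatment of (1)$\Leftrightarrow$(2), (1)$\Leftrightarrow$(3) and (1)$\Leftrightarrow$(4) is correct and coincides with what the paper intends: the paper dismisses these three equivalences as immediate consequences of Theorem~\ref{up_uss}, Proposition~\ref{cbc_uss} and the complete-space theorem, and your translation lemma (monotonicity of $\alpha$, $\eta$ plus $\gamma^*(S^{\infty}_d(x,\delta/2))\le\delta/2$, while $\gamma^*(A)<\delta$ forces $A\subseteq S^{\infty}_d(x,\delta)$), the inequality $f_c\le\nu$ (chainable components are clopen, so $\cl(S_d(x,\varepsilon))\subseteq S^{\infty}_d(x,\varepsilon)$), and the closure-invariance remark are exactly the details the paper leaves unsaid.

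For (5) your route genuinely differs from the paper's, and yours is the one that can be made to work. The paper instead proves (2)$\Leftrightarrow$(5) directly: the forward direction from the $\gamma^*$-condition together with the known structure theorem for uniformly paracompact spaces, and the reverse direction by citing \cite[Theorem 28]{Garrido2014} and then invoking the inequality $\gamma(A)\le\gamma^*(A)$. That inequality is backwards: since $S^m_d(x,\varepsilon)\subseteq S^{\infty}_d(x,\varepsilon)$, one always has $\gamma^*(A)\le\gamma(A)$, and the gap can be infinite (for $A=\mathbb{N}$ in $\mathbb{R}$, $\gamma^*(A)=0$ while $\gamma(A)=\infty$). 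So the paper's proof of (5)$\Rightarrow$(2) collapses, and in fact no proof exists for the literal statement: as you observe, $\mathbb{R}$ satisfies condition (5) via the first alternative (it is uniformly locally compact) yet fails (1) since $\operatorname{nslc}(\mathbb{R})=\mathbb{R}$. Your insistence on the strong, $f_c$-governed notions is the necessary repair, and both of your flagged checks do go through under that reading: if $f_c(x)>\mu$ then $S^{\infty}_d(x,\mu)$ is compact and contains $\cl(S_d(x,\mu))$, so the lower bound on $f_c$ off enlargements of $\operatorname{nslc}(X)$ supplied by condition (8) of Theorem~\ref{equivalentconditionsforspace} yields (strong) uniform local compactness of the sets $\{x: d(x,\operatorname{nslc}(X))\ge\varepsilon\}$, and the $\alpha(S^{\infty}_d(x,\delta))<\varepsilon$ estimate at singular points comes from the functional condition of (2); conversely, compactness of $\operatorname{nslc}(X)$ plus strong uniform local compactness of each $\{x: d(x,\operatorname{nslc}(X))\ge\varepsilon\}$ is literally condition (8), so in that direction the $\alpha$-estimate is not even needed. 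One caution for your write-up: repairing only the first alternative is insufficient. Keeping $\operatorname{nlc}(X)$ and ordinary uniform local compactness in the second alternative still produces a false statement: take a genuine uniformly star superparacompact space with $\operatorname{nslc}(X)=\{p\}$ and adjoin, far from $p$, a sequence of mutually separated discrete rays with gauges $\varepsilon_n\to 0$; then $\operatorname{nlc}=\operatorname{nslc}=\{p\}$ is compact, every $\{x:d(x,p)\ge\varepsilon\}$ is uniformly locally compact in the ordinary sense, and $\alpha(S^{\infty}_d(p,\delta))<\varepsilon$ is achievable, yet the ray endpoints have $f_c\to 0$ without clustering. So the strong notion must be used in both alternatives, exactly as your $f_c$-based plan prescribes.
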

\begin{proof}
 We only need to prove \((2)~\Leftrightarrow~(5)\). Firstly, we assume that the condition (2) holds. Let $\varepsilon > 0$ be given. Then there exists $\delta > 0$ such that, for every $x \in X$, we have \(\alpha(S^{\infty}_d(x, \frac{\delta}{2})) < \varepsilon,\) because \(
\gamma^*(S^{\infty}_d(x, \frac{\delta}{2}))<\delta.\) The remainder of condition follows from the characterization of uniform paracompact space.

Conversely, since \(
\alpha(S^{\infty}_d(x, \delta)) < \varepsilon, 
\) we have \(
\alpha(S_d(x, \delta)) < \varepsilon
\). As a result, by \cite[Theorem 28]{Garrido2014} $X$ is uniformly paracompact and $\forall \varepsilon>0$, $\exists\delta>0$ such that for every non-empty subset $A$ of $X$ if $\gamma(A)<\delta$, then $\alpha(A)<\varepsilon$. Now, the condition follows from the fact that $\gamma(A)\leq \gamma^*(A)$ for each non-empty subset $A$ of $X$. 
  
\end{proof}

\noindent \textbf{Statement and Deceleration:} Some further investigations related to this work are not included in the present version. These include a characterization of the completion of uniformly star superparacompact spaces, as well as a study of such spaces in terms of Lipschitz-type functions. Based on a comparison of abstracts, the author assumes that \cite{Das2021} is the arXiv version of \cite{Adhikary2024}, as the full text of the latter was not accessible to the author.

\end{document}